      \string\usetikzlibrary{decorations.markings} to use arrows with markings}{}}{}%
\let\noi\noindent
\newcommand{\ms}{\medskip}
\newcommand{\Gal}{\mathrm{Gal}}
\newcommand{\Hom}{\mathrm{Hom}}
\newcommand{\pr}{\mathit{pr}}
\newcommand{\Nis}{\mathrm{Nis}}
\newcommand{\m}{\mathfrak{m}}
\newcommand{\lang}{\longrightarrow}
\newcommand{\tat}{\mathrm{tt}}
\renewcommand{\O}{\mathcal{O}}
\newcommand{\Ha}{{ \mathcal{H}}}
\newcommand{\Cat}{\mathrm{Cat }}
\newcommand{\et}{\mathrm{et}}
\newcommand{\FEt}{\mathrm{FEt}}
\newcommand{\FT}{\mathrm{FT}}
\newcommand{\pro}{\mathrm{pro\textit{-}}}
\newcommand{\sh}{\mathit{sh}}
\newcommand{\A}{{\mathds A}}
\newcommand{\Sh}{\mathit{Sh}}
\newcommand{\cC}{{\mathscr C}}
\newcommand{\cF}{{\mathscr F}}
\newcommand{\cH}{{\mathscr H}}
\newcommand{\liso}{\mathrel{\hbox{$\longrightarrow$} \kern-2.4ex\lower-1ex\hbox{$\scriptstyle\sim$}\kern1.7ex}}
\newtheoremstyle{alexthm}
  {}
  {}
  {\sl }
  {}
  {\bf}
  {.}
  {.5em}
  {}
\theoremstyle{alexthm}
\newtheorem{theorem}{Theorem}[section]
\newtheorem*{theorem*}{Theorem}
\newtheorem{corollary}[theorem]{Corollary}
\newtheorem{proposition}[theorem]{Proposition}
\newtheorem{lemma}[theorem]{Lemma}
\newtheorem*{lemma*}{Lemma}
\newtheoremstyle{alexdef}
  {}
  {}
  {\rm }
  {}
  {\bf}
  {.}
  {.5em}
  {}
\theoremstyle{alexdef}
\newtheorem*{example*}{Example}
\newtheorem{remark}[theorem]{Remark}
\newtheorem{definition}[theorem]{Definition}
\DeclareMathOperator{\Spec}{\textrm{Spec}}
\newcommand{\sets}{\mathrm{sets}}
\DeclareMathOperator{\ch}{char}
\DeclareMathOperator{\Val}{Val}
\DeclareMathOperator*{\colim}{colim}
 \newcommand{\D}{\Delta^{\!\mathit{op}}}
  \newcommand{\sm}{\mathrm{Sm}/S}
  \newcommand{\shvt}{{\mathit{Shv}_{\mathrm{t}}}(\sm)}
\title{Homotopy invariance of tame homotopy groups of regular schemes}
\author{Alexander Schmidt}
\date{}
\begin{document}
\maketitle
\tableofcontents
The \'{e}tale homotopy groups of schemes as defined by Artin and Mazur \cite{AM} have the disadvantage of being homotopy invariant only in characteristic zero. This and other related problems led to the definition of the \emph{tame} topology which is coarser than the \'{e}tale topology by disallowing wild ramification along the boundary of compactifications, see \cite{HS21}. The objective of this paper is to show that the associated tame homotopy groups are indeed ($\A^1$-)homotopy invariant, at least for regular schemes.

\bigskip
The author thanks the unknown reviewer of this article for his detailed and constructive criticism.

\section{The tame site}

Throughout this article, let $S$ be a (base) scheme and $X$ an $S$-scheme. We recall the definition of the tame site $(X/S)_t$ from \cite{HS21}.

By a valuation on a field we mean a non-archimedean valuation, not necessarily discrete or of finite rank. The trivial valuation is included. If $v$ is a valuation, we denote by $\O_v,\m_v$ and $k(v)$ the valuation ring, its maximal ideal and the residue field. If $v$ is a valuation on $K$, we denote by $\O_v,\m_v$ and $k(v)$ the valuation ring, its maximal ideal and the residue field. By $\O_v^h$ and $\O_v^\sh$ we denote the henselization and strict henselization of $\O_v$ and by $K_v^h$ and $K_v^\sh$ their quotient fields.

Let $v$ be a valuation on $K$ and $w$  an extension of $v$ to a finite separable extension field $L/K$. We call $w/v$ \emph{unramified} if $\O_v \to \O_w$ is \'{e}tale, i.e., $L_w^\sh =K_v^\sh$,  and \emph{tamely ramified} if the field extension $L_w^\sh /K_v^\sh$ is of degree prime to the residue characteristic $p=\ch k(v)$. In this case $L_w^\sh /K_v^\sh$ is automatically Galois with abelian Galois group of order prime to $p$. If $L/K$ is Galois, then $w/v$ is unramified (resp.\ tamely ramified) if and only if the inertia group $T_w (L/K)$ (resp.\ the ramification group $R_w(L/K)$) is trivial. (See \cite{Ray70} and \cite{EP2005}.)

Let $f: X\to S$ be a scheme morphism.
An \emph{$S$-valuation} on $X$ is a valuation $v$ on the residue field $k(x)$ of some point $x\in X$ such that there exists a morphism
$
\varphi: \Spec(\O_v) \to S
$
making the diagram
\[
\begin{tikzcd}
\Spec(k(x)) \arrow[rr] \arrow[d] &  &X \arrow[d,"f"] \\
\Spec(\O_v) \arrow[rr, "\varphi"] &  & S
\end{tikzcd}
\]
commutative (if $S$ is separated, $\varphi$ is unique if it exists).
The set of all $S$-valuations is denoted by $\Val_S X$.
We denote elements of $\Val_S X$ in the form $(x,v)$, $x\in X$, $v\in \Val_S(k(x))$.

\begin{definition}
The \emph{tame site} $(X/S)_t$ consists of the following data:

\smallskip\noindent
The category $\Cat (X/S)_t$ is the category of \'{e}tale morphisms $p:U\to X$.

\smallskip\noindent
A family $(U_i \to U)_{i\in I} $ of morphisms in $\Cat (X/S)_t$ is a covering if it is an \'{e}tale covering and for every point $(u,v)\in \Val_S(U)$ there
exists an index~$i$ and a point $(u_i,v_i)\in \Val_S U_i$ mapping to $(u,v)$ such that $v_i/v$ is (at most) tamely ramified.
\end{definition}

The small \'{e}tale site $X_\et$, the tame site $(X/S)_t$ and the Nisnevich site $X_\Nis$ all have the same underlying category. Every Nisnevich covering is a tame covering and every tame covering is an \'{e}tale covering. Hence there are natural morphisms of sites
\[
X_\et \stackrel{\alpha}{\lang} (X/S)_t \stackrel{\beta}{\lang} X_\Nis,
\]
where $\alpha_*$ and $\beta_*$ are fully faithful.
In particular, being an \'{e}tale sheaf can be viewed as a property of tame sheaves.

\medskip
A \emph{tame point} of $X/S$ is a pair $(\bar{x},\bar{v})$ where $\bar{x}:\Spec k(\bar{x})\to X$ is a morphism from the spectrum of a field to~$X$ and~$\bar{v}$ is an $S$-valuation on $k(\bar{x})$ such that~$k(\bar{x})$ does not admit a nontrivial,  finite, separable extension to which $\bar{v}$ has a tamely ramified extension. In other words, $k(\bar{x})$ is strictly henselian with respect to~$\bar{v}$ and the absolute Galois group of $k(\bar x)$ is a pro-$p$ group, where $p$ is the residue characteristic of $\bar{v}$ (if $p=0$ this means that the group is trivial).

If $\bar v$ is the trivial valuation, we call the point $(\bar{x},\bar{v})$ \emph{trivial point}. Note that (regardless of the residue characteristic) the field $k(\bar x)$ of a trivial point is separably closed.

A tame point $(\bar{x},\bar{v})$ induces a morphism of sites
\[
\bar x: (\Spec k(\bar{x})/\Spec \O_{\bar v})_t \lang (X/S)_t.
\]
By \cite[Lemma 2.5]{HS21}, the inverse image sheaf functor ${\bar x}^*$ is exact for sheaves of sets as well as of abelian groups. Since every tame covering of $\Spec(k(\bar x))$ splits, the global sections functor on $(\Spec k(\bar{x})/\Spec \O_{\bar v})_t$ is exact. Therefore the functor ``stalk at $\bar x$''
\[
 \cF \longmapsto \cF_{\bar x}:=\Gamma (\Spec k(\bar{x}),\bar x^*\cF),
\]
is a topos-theoretical point of $(X/S)_t$. The trivial points correspond to the usual geometric points of the \'{e}tale site (followed by $\alpha: X_\et \to (X/S)_t$).

\medskip
We denote the category of tame sheaves of abelian groups by $\Sh_t(X/S)$.

\begin{lemma} The family of tame points is conservative for the site $(X/S)_t$. In particular,
a tame sheaf of abelian groups $\cF\in \Sh_t(X/S)$ is zero if and only if its stalks at all tame points are zero.
\end{lemma}

\begin{proof}
See  \cite[Lemma 2.10]{HS21}.
\end{proof}

The following theorem lists some of the properties of the tame site proven in \cite{HS21}:

\begin{theorem}\label{mainproperty}
Let $X$ be an $S$-scheme and let $\alpha: X_\et \to (X/S)_t$ be the natural morphism of sites. Then the following hold:
\begin{enumerate}[\rm (i)]
\item (Topological invariance, \cite[Proposition 3.1]{HS21}) If $X'\rightarrow X$ is a universal homeomorphism of $S$-schemes, then the sites $(X/S)_t$ and $(X'/S)_t$ are isomorphic.
\item (Comparison with \'{e}tale cohomology for invertible coefficients, \cite[Proposition 8.1]{HS21}) Let $\cF\in \Sh_\et(X)$ be an abelian sheaf  with $mF=0$ for some $m$ which is invertible on $S$. Then the natural map
\[
H^n_t(X/S,\alpha_*\cF)\cong  H^n_\et(X,\cF)
\]
is an isomorphism for all $n\ge 0$.

\item (Comparison with \'{e}tale cohomology for proper schemes, \cite[Proposition 8.2]{HS21}) Assume that $S$ is quasi-compact and quasi-separated and that $X\to S$ is proper. Assume moreover that every finite subset of $X$ is contained in an affine open. Then, for every sheaf $\cF$ of abelian groups on $(X/S)_t$  the natural map
\[
H^n_t(X/S, \cF)\to H^n_\et(X,\alpha^*\cF)
\]
is an isomorphism for all $n\ge 0$.
\item (Finite subcoverings, \cite[Theorem 4.1]{HS21}) Assume that $S$ is quasi-compact and quasi-separated and that  $X$ is quasi-compact. Then every tame covering of $X$ admits a finite subcovering.
\item (Colimits of sheaves, \cite[Theorem 4.5]{HS21}) Assume that $S$ and $X$ are quasi-compact and quasi-separated and let $(\cF_i)$ be a filtered direct system of abelian sheaves on $(X/S)_t$. Then
\[
\colim_i \, H^n_t(X/S, \cF_i)\cong H^n_t(X/S, \colim_i \, \cF_i)
\]
for all $n\ge 0$.
\item (Limits of schemes, \cite[Theorem 4.6]{HS21}) Let~$(f_i:X_i \to S_i)_{i\in I}$ be a filtered inverse system of scheme morphisms with affine transition morphisms and assume that all $S_i$ and $X_i$ are quasi-compact and quasi-separated. Denote by $X \to S$ its inverse limit. Assume that~$i_0 \in I$ is a final object and let~$\cF_0$ be a sheaf of abelian groups on $(X_{i_0}/S_{i_0})_t$.
 For $i \in I$ denote by~$\cF_i$ its pullback to~$(X_i/S_i)_t$ and by~$\cF$ its pullback to~$(X/S)_t$.
 Then the natural map
 $$
 \colim_{i \in I}\, H^n_t(X_i/S_i,\cF_i) \longrightarrow H^n_t(X/S,\cF)
 $$
 is an isomorphism for all $n \geq 0$.
\end{enumerate}
\end{theorem}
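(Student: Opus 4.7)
Each of the six assertions is explicitly attributed to a specific result in \cite{HS21}, so the proof is essentially a reference: one invokes \cite[Proposition 3.1]{HS21} for (i), \cite[Proposition 8.1]{HS21} for (ii), \cite[Proposition 8.2]{HS21} for (iii), and \cite[Theorems 4.1, 4.5, 4.6]{HS21} for (iv)--(vi) respectively. For the reader's orientation I would supplement the citations with a brief indication of the underlying strategies, since the six statements have two quite different flavours.

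The comparison statements (i)--(iii) all proceed by reducing to local/henselian data. For (i), one uses that a universal homeomorphism induces an equivalence of étale sites (SGA~4) and preserves $\Val_S$ together with ramification indices, so that the respective notions of tame covering literally coincide. For (ii), the key point is that for coefficients of order $m$ invertible on $S$ wild ramification is invisible, so that $R^q\alpha_*\cF = 0$ for $q>0$; this reduces tame cohomology to étale cohomology. For (iii), properness enters via the valuative criterion, which guarantees that every $S$-valuation on $X$ has a centre in $X$; combined with the hypothesis that finite subsets lie in an affine open, this identifies the tame stalks of $\alpha^*\cF$ with $\cF$ on suitable henselian neighbourhoods and yields the comparison.

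The finiteness and continuity results (iv)--(vi) build on each other in a standard way. Statement (iv) combines finiteness of tame extensions of a fixed valuation (a Galois-theoretic input about $R_w(L/K)$) with the étale finite-subcover property on quasi-compact schemes. Given (iv), statement (v) is the usual Čech-cohomological formalism: cohomology is computed via finite tame covers, and filtered colimits commute with finite limits. For (vi), one writes a sheaf on $X=\varprojlim X_i$ as the pullback of $\cF_0$ on $X_{i_0}$ and combines (v) with finite-presentation arguments for the affine transition morphisms to approximate tame covers of $X$ by tame covers of the $X_i$.

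The main obstacle, and the reason these results require genuine work rather than being formal, is the interaction between valuations and limits: a valuation on a residue field in $X = \varprojlim X_i$ need not come from a valuation on any single $X_i$, and the tameness of an extension in the limit might only be witnessed at a sufficiently large stage. These subtleties are handled in \cite{HS21}, and the theorem is invoked here as a black box for the applications that follow.
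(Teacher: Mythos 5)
Your proposal matches the paper exactly: Theorem~\ref{mainproperty} is stated as a collection of results imported from \cite{HS21}, and the paper offers no proof beyond the citations already embedded in the statement, which is precisely what you invoke. Your supplementary sketches of the underlying strategies are reasonable orientation but are not required (and are not checked here against \cite{HS21}); the citation alone suffices.
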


\section{The profinite tame fundamental group}

The \'{e}tale fundamental group of a geometrically pointed, connected scheme $(X,\bar{x})$ was defined in \cite{SGA1} as the automorphism group of the fibre functor
\[
F_{\bar x}: \FEt_X \lang \sets, \ (Y\to X) \longmapsto \pi_0(Y\times_X{\bar{x}}),
\]
where $\FEt_X$ is the Galois category of finite \'{e}tale $X$-schemes. This is a profinite group. To distinguish it from other constructions, we denote it  by $\hat\pi_1^\et(X,\bar{x})$ and call it the \emph{profinite \'{e}tale fundamental group}.

\medskip
A similar  procedure yields the \emph{profinite tame fundamental group} $\hat \pi_1^t(X/S,\bar{x})$ of a connected $S$-scheme $X$:

\begin{definition} Let $X$ be a connected $S$-scheme. A finite \'{e}tale morphisms $f:Y \to X$ is  called \emph{finite tame cover} if for every  $y\in Y$ and $w\in \Val_S k(y)$ the extension $k(y)/k(f(y))$ is at most tamely ramified at $w$. We define the category $\FT_{X/S}$ as the category with objects  the finite tame covers of $X$ and with $S$-morphisms as morphisms.
 \end{definition}

 \begin{remark}
 Let $Y\to X$ be an \'{e}tale \emph{Galois} cover. Since the Galois group acts transitively on the fibres, we have
 \[
 Y\in \FT_{X/S}\ \Longleftrightarrow \ Y \to X \text{ is a covering in } (X/S)_t.
 \]
 \end{remark}
Now let $(\bar x,\bar v)$ be a tame point of $X/S$. Then
\[
F_{(\bar x,\bar v)}: \FT_{X/S}\lang \sets  ,\ (Y\to X) \longmapsto \pi_0(Y\times_X \bar{x})
\]
is a fibre functor on the Galois category $\FT_{X/S}$.
\begin{definition} We denote the automorphism group of the fibre functor $F_{(\bar x,\bar v)}$ by $\hat \pi_1^t(X/S,(\bar{x},\bar{v}))$. It is a profinite group that we call the \emph{profinite tame fundamental group} of $X/S$.
\end{definition}
\medskip
By the general theory of Galois categories we obtain from \cite[TAG 0BN4]{stacks-project}:

\begin{proposition} \label{finite-pi1-sets} Let $X$ be   a connected $S$-scheme and let $(\bar x, \bar v)$ be a tame point of\/ $X/S$. Then the fibre functor induces a natural equivalence  between $\FT_{X/S}$ and the category of finite discrete $\hat \pi_1^t(X/S,(\bar x,\bar v))$-sets.
\end{proposition}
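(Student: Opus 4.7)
The plan is to verify that the pair $(\FT_{X/S}, F_{(\bar x, \bar v)})$ satisfies the axioms of a Galois category in the sense of Grothendieck, and then invoke the classification theorem \cite[TAG 0BN4]{stacks-project}. Since $\FEt_X$ equipped with the fibre functor attached to a geometric point of $X$ is already known to be a Galois category, the task splits in two: first, showing that the full subcategory $\FT_{X/S} \subset \FEt_X$ is closed under the constructions that enter the Galois category axioms; and second, showing that the restriction of the fibre functor $F_{(\bar x, \bar v)}$ to this subcategory is still exact and conservative.

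For the closure properties, one checks that $\FT_{X/S}$ contains the terminal object $X$ and is stable under finite disjoint unions, fibre products over $X$, connected components, and quotients by finite groups of $X$-automorphisms. Each of these reduces to the statement that tame ramification of a valuation extension on residue fields is preserved by the relevant field-theoretic operations: composites of tamely ramified extensions (for fibre products), passage to intermediate subfields of a tamely ramified finite Galois extension (for quotients and connected components), and trivially disjoint unions. These facts about tame ramification are classical and are recorded in the references cited in the excerpt. Exactness of $F_{(\bar x, \bar v)}$ with respect to finite limits and colimits is inherited from the standard fact that $Y \mapsto \pi_0(Y \times_X \bar x)$ is exact on $\FEt_X$, which uses only that $\bar x$ is the spectrum of a field.

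The step where the definition of a tame point is genuinely used, and the one I expect to be the main obstacle, is the conservativity of $F_{(\bar x, \bar v)}$ on $\FT_{X/S}$: one must show that $Y \times_X \bar x$ is always a disjoint union of copies of $\Spec k(\bar x)$, so that $\pi_0(Y \times_X \bar x)$ faithfully records the geometric fibre. Given $Y \in \FT_{X/S}$ and a point $y \in Y \times_X \bar x$, the residue field extension $k(y)/k(\bar x)$ is finite separable, and by definition of $\FT_{X/S}$ any extension of $\bar v$ to $k(y)$ is at most tamely ramified over $\bar v$. But the hypothesis that $(\bar x, \bar v)$ is a tame point states precisely that $k(\bar x)$ admits no nontrivial finite separable extension to which $\bar v$ admits a tamely ramified extension; hence $k(y) = k(\bar x)$ and the fibre splits as required. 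With conservativity in hand, the Galois category axioms are fully verified and the equivalence follows from \cite[TAG 0BN4]{stacks-project}.
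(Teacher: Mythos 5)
Your proposal is correct and follows essentially the same route as the paper, which simply asserts that $(\FT_{X/S}, F_{(\bar x,\bar v)})$ is a Galois category and cites \cite[TAG 0BN4]{stacks-project}; you supply the verification of the axioms that the paper leaves implicit, and the decisive use of the tame-point condition in the conservativity step is exactly the right place to locate the content. The only point worth tightening is that the tameness condition in the definition of $\FT_{X/S}$ is stated for points of $Y$ and valuations on \emph{their} residue fields, so to conclude that the residue extensions occurring in the fibre $Y\times_X\bar x$ are at most tamely ramified over $\bar v$ you must also invoke the stability of tame ramification under the base change $k(f(y))\to k(\bar x)$ --- the same standard fact you already use for fibre products.
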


By topological invariance, $\hat \pi_1^t(X/S,(\bar{x},\bar{v}))$ only depends on the reduction $X_\mathrm{red}$ of $X$.  The open subgroups of $\hat \pi_1^t(X/S,(\bar{x},\bar{v}))$ correspond to the connected finite tame covers of $X$ (Galois if and only if the subgroup is normal). If $(\bar y, \bar w)$ is another tame point, then the groups
$\hat \pi_1^t(X/S,(\bar{x},\bar{v}))$ and $\hat \pi_1^t(X/S,(\bar{y},\bar{w}))$ are isomorphic, the isomorphism being unique up to inner automorphisms. Any commutative diagram
\[
\begin{tikzcd}
X\rar\dar & X'\dar\\
S \rar &S'
\end{tikzcd}
\]
induces a homomorphism $\hat \pi_1^t(X/S,(\bar{x},\bar{v})) \to \hat \pi_1^t(X'/S',(\bar{x}',\bar{v}'))$, where $(\bar{x}',\bar{v}')$ is the point of $X'/S'$ induced by $(\bar{x},\bar{v})$.

\medskip
For a geometric point $\bar{x}=(\bar{x}, \mathrm{triv})$ we obtain a natural surjection
\[
\hat \pi_1^\et(X/S,\bar{x}) \twoheadrightarrow \hat \pi_1^t(X/S,\bar{x})
\]
which is an isomorphism if $X\to S$ is proper.

\bigskip
Our objective is to show that the profinite tame fundamental group is homotopy invariant on normal schemes.

\begin{lemma}\label{a1norm}
Let $X$ be a connected, normal scheme, $K=k(X)$, $L/K$ an algebraic field extension and $Y=X_L$ the normalization of $X$ in $L$. Then $\A^1_Y$ is the normalization of $\A^1_X$ in the field extension $k(\A^1_Y)/k(\A^1_X)$.
\end{lemma}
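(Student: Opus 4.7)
The plan is to reduce the statement to the classical fact that the polynomial ring over a normal (integrally closed) domain is again a normal domain. First I would reduce to the affine case: since $X$ is connected and normal it is integral, so $K=k(X)$ makes sense. Both formation of $\A^1$ (a base change) and formation of the normalization in an algebraic field extension commute with restriction to affine opens of $X$ — the latter because integral closure commutes with localization. Hence it suffices to prove the claim when $X=\Spec A$ with $A$ a normal domain whose fraction field is $K$. In that setting $Y=\Spec B$, where $B\subset L$ is the integral closure of $A$ in $L$; then $B$ is a normal domain with fraction field $L$, and $\A^1_X=\Spec A[t]$, $\A^1_Y=\Spec B[t]$.

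Next I would identify the function fields. Since $A$ and $B$ are domains, so are $A[t]$ and $B[t]$, and their fraction fields are $K(t)=k(\A^1_X)$ and $L(t)=k(\A^1_Y)$ respectively. The task thus reduces to showing that $B[t]$ is the integral closure of $A[t]$ inside $L(t)$. For this I would verify two things: (i)~$B[t]$ is integral over $A[t]$, which is immediate from the integrality of $B$ over $A$ (the coefficients of a given element of $B[t]$ generate a module-finite $A$-subalgebra of $B$, hence a module-finite, and in particular integral, $A[t]$-subalgebra of $B[t]$); and (ii)~$B[t]$ is integrally closed in its own fraction field $L(t)$, which is the standard result that $R$ normal implies $R[t]$ normal. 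Combining (i) and (ii) identifies $B[t]$ with the integral closure of $A[t]$ in $L(t)$, which is exactly the claim.

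I do not expect a real obstacle here; the content really comes down to the ``$R$ normal implies $R[t]$ normal'' lemma. The only point that needs a moment's care is the bookkeeping that $Y=X_L$ is itself integral, so that $k(\A^1_Y)$ makes sense and equals $L(t)$; this uses that $L$ is a field, so $B\subset L$ is automatically a domain.
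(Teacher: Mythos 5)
Your proposal is correct and follows essentially the same route as the paper: reduce to the affine case $X=\Spec A$, $Y=\Spec B$, observe that $B[t]$ is integral over $A[t]$, and invoke the standard fact (Bourbaki, Alg.\ Comm.\ V\,\S1) that a polynomial ring over an integrally closed domain is integrally closed. The extra bookkeeping you supply (localization compatibility, identification of the function fields) is fine and only makes explicit what the paper leaves implicit.
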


\begin{proof}
We may assume that $X=\Spec(A)$ is affine, hence $Y=\Spec(B)$, with $B=A_L$ the integral closure of $A$ in $L$. Then $B[T] \subset L(T)$ is integral over $A[T]\subset K(T)$. On the other hand, by \cite[V\,\S1 Cor.\,1 to Prop.\,13]{BourbCA}, $B[T]$ is integrally closed. This completes the proof.
\end{proof}

\begin{theorem}[Homotopy invariance of the profinite tame fundamental group of normal schemes] \label{pi1hi} Let $X$ be a normal connected $S$-scheme and $(\bar{y},\bar w)$ a tame point of\/ $\A^1_X/S$ with image $(\bar{x},\bar v)$ in $X$. Then the projection $\pr: \A^1_X\to X$ induces an isomorphism
\[
\pr_*^{X/S}: \hat\pi_1^t(\A^1_X/S,(\bar y,\bar w)) \lang \hat\pi_1^t(X/S,(\bar x,\bar v)).
\]
\end{theorem}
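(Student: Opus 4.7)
The plan is to apply the equivalence of Galois categories from Proposition~\ref{finite-pi1-sets} and establish that the pullback functor
\[
\pr^* : \FT_{X/S} \lang \FT_{\A^1_X/S}
\]
is an equivalence. The zero section $s: X \hookrightarrow \A^1_X$ satisfies $\pr \circ s = \id_X$, which formally forces $\pr_*$ to be surjective and hence $\pr^*$ to be fully faithful on the Galois categories. The real content is essential surjectivity: every connected finite tame cover of $\A^1_X$ must descend to a finite tame cover of $X$.

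So let $f: Y \to \A^1_X$ be a connected object of $\FT_{\A^1_X/S}$. Since $X$ is normal, so is $\A^1_X$, and since $f$ is étale, $Y$ is integral and normal with function field $N$, a finite separable extension of $K(T)$ where $K = k(X)$. The generic fibre $Y_K := Y \times_X \Spec K \to \A^1_K$ is again a connected finite tame cover and has function field $N$. The \emph{field case} (discussed below as the main obstacle) yields a finite separable extension $L/K$, itself tame at every $S$-valuation of $K$, such that $N = L(T)$.

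Set $Y' := X_L$, the normalisation of $X$ in $L$. By Lemma~\ref{a1norm}, $\A^1_{X_L}$ is the normalisation of $\A^1_X$ in $N = L(T)$; since $Y$ is normal with function field $N$, we obtain an isomorphism $Y \cong \A^1_{X_L}$ over $\A^1_X$. Pulling this isomorphism back along $s$ identifies $X_L$ with $Y \times_{\A^1_X} X$, so $X_L \to X$ is finite étale, being the base change of $f$. For tameness: any $(y,w) \in \Val_S(X_L)$ with image $(x,v) \in \Val_S(X)$ lifts along the zero section to $(s(y), w) \in \Val_S(\A^1_{X_L})$, mapping to $(s(x), v) \in \Val_S(\A^1_X)$ with the same residue extension of valued fields. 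Tameness of $f$ at $(s(y), w)$ thus forces tameness of $X_L \to X$ at $(y, w)$. Hence $X_L \in \FT_{X/S}$ and $Y \cong \pr^*(X_L)$.

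The main obstacle is the field case: one must show that every connected finite tame cover of $\A^1_K/S$ has function field $L(T)$ for some finite separable $L/K$ that is tame at every $S$-valuation of $K$. The geometric core is Grothendieck's classical theorem from SGA~1 that the tame fundamental group of the affine line over a separably closed field is trivial; base-changing to $K^{\sep}$ splits the cover into copies of $\A^1_{K^{\sep}}$, forcing $N = L(T)$ for some finite separable $L \subset K^{\sep}$. The bookkeeping to finish is (i)~confirming that tameness at the valuation at infinity on $K(T)/K$ is indeed imposed by the hypothesis—this valuation is an $S$-valuation on $\A^1_X$ since its valuation ring has residue field $K$ and admits a morphism to $S$ via the structure morphism of $X$, and this tameness at infinity is precisely what rules out wild phenomena such as Artin--Schreier covers; and (ii)~verifying that tameness of $L(T)/K(T)$ at the Gauss extension of an $S$-valuation of $K$ is equivalent to tameness of $L/K$ at that valuation, which is a standard calculation with inertia groups.
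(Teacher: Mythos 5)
Your proposal is correct and follows essentially the same route as the paper: surjectivity via the zero section, reduction to descending a connected finite tame cover of $\A^1_X$, passage to the generic fibre where triviality of tame covers of the affine line over a separably closed field gives $N=L(T)$, identification of the cover with $\A^1_{X_L}$ via \cref{a1norm}, and tameness of $X_L\to X$ by pulling back along the zero section. The only cosmetic difference is that you phrase the reduction in terms of essential surjectivity of $\pr^*$ on Galois categories rather than the paper's commutative diagram of fundamental groups over the bases $\eta$ and $S$.
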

\begin{proof}
Since the fundamental group is independent of the base point, we may assume that $\bar{y}=(\bar{y}, \mathrm{triv})$ is a geometric point lying over the generic fibre $\A^1_\eta$ of $\A^1_X$, where $\eta\in X$ is the generic point.

Since the zero section to $\pr$ is an $S$-morphism, we conclude that $\pr_*^{X/S}$ is surjective. We have a commutative diagram
\[
\begin{tikzcd}
\pi_1^t(\A^1_\eta/\eta,\bar y)\arrow[d,"\pr^{\eta/\eta}_*",two heads]\arrow[r,two heads]&\pi_1^t(\A^1_\eta/S,\bar y)\arrow[d,"\pr^{\eta/S}_*",two heads]\arrow[r,two heads]& \pi_1^t(\A^1_X/S,\bar y)\arrow[d,"\pr_*^{X/S}",two heads]\\
\pi_1^t(\eta/\eta,\bar x)\arrow[r,two heads]&\pi_1^t(\eta/S,\bar x)\arrow[r,two heads]&\pi_1^t(X/S,\bar x).
\end{tikzcd}
\]
By definition, every finite separable extension of $k(\eta)$ is tame with respect to $k(\eta)$, hence $\pi_1^t(\eta/\eta,\bar x)\cong \Gal(\overline{k(\eta)}/k(\eta))$.
The fundamental exact sequence for the \'{e}tale fundamental groups immediately implies the exactness of the analogous sequence
\[
1 \longrightarrow \pi_1^t(\A^1_{\bar \eta}/{\bar \eta},\bar y) \longrightarrow \pi_1^t(\A^1_\eta/\eta,\bar y) \stackrel{\pr^{\eta/\eta}_*}{\longrightarrow} \pi_1^t(\eta/\eta,\bar x) \longrightarrow 1.
\]
Since the affine line over a separably closed field has no nontrivial tame covers (by the Hurwitz formula), we see that $\pi_1^t(\A^1_{\bar \eta}/{\bar \eta},\bar y)=1$. Hence $\pr^{\eta/\eta}_*$ is an isomorphism. In order to show that $\pr_*^{X/S}$ is an isomorphism, it suffices to show that every connected tame (with respect to $S$) finite cover $Z\to \A^1_X$ is the base change $\A^1_Y \to \A^1_X$ of a finite tame (with respect to $S$) cover $Y\to X$.
Let $K$ be the function field of $X$, hence $\eta=\Spec K$.
We put $Z_\eta=Y\times_X \eta$ and obtain the following commutative diagram
\[
\begin{tikzcd}
  \Spec (k(Z)) \dar \arrow[hook, r]&Z_\eta \dar \arrow[hook,r]& Z\dar\\
  \Spec(k(\A^1_X))\arrow[hook,r]&\A^1_\eta \arrow[hook,r]&\A^1_X.
\end{tikzcd}
\]
Since tame covers are stable under base change, $Z_\eta \to \A^1_\eta$ is tame with respect to $S$, in particular, tame with respect to $\eta$. Since $\pr^{\eta/\eta}_*$ is an isomorphism, $Z_\eta\to \A^1_\eta$ comes by base change from $\eta$, i.e., there is a finite separable field extension $L/K$ such that $Z_\eta\cong \A^1_L$.

As $Z \to \A^1_X$ is an \'{e}tale cover, $Z$ is the normalization of $\A^1_X$ in $k(Z)=k(Z_\eta)=k(\A^1_L)$.
By \cref{a1norm}, the normalization of $\A^1_X$ in $k(\A^1_L)$ is equal $\A^1_Y$, where $Y$ is the normalization of $X$ in $L$. We conclude that $Z\cong\A^1_Y$. It remains to show that the (a priori only integral) morphism $Y\to X$ is a finite tame cover with respect to $S$. This follows since we recover $Y\to X$ as the base change of $Z\to \A^1_X$ along the zero section.
\end{proof}

\section{Comparison with the curve tame fundamental group}

In good situations, there is another notion of tameness which can be formulated without valuation theory only in terms of algebraic geometry. We speak of curve tameness, which was introduced in \cite{KeSch10}. We want to compare it with the notion of tameness of this article.
Unfortunately, the statement of \cite[Proposition 5.2]{HS21} is wrong in that the assumption ``$S=\Spec k$ for a field $k$ and $X/S$ is separated and of finite type'' is missing there. Therefore we take up the subject again and prove a corrected and slightly generalized statement in \cref{pi1ct=pi1t} below.

\ms\noi
Let  $S$ be a scheme which is integral, pure-dimensional (i.e., $\dim  S= \dim \mathcal{O}_{S,s}$ for every closed point $s\in S$), separated and excellent. Furthermore, we let $X\to S$ be separated and of  finite type.
For integral $X$ we put
\[
\dim_S X:= \mathrm{deg.tr.}(k(X)/k(T)) + \dim_{\textrm{Krull}} T,
\]
where $T$ is the closure of the image of $X$ in $S$. If the image of $X$ in $S$ contains a closed point of $T$,  then $\dim_S X=\dim_{\textrm{Krull}} X$ by \cite[5.6.5]{EGAIV.4}.
We call $X$ an \emph{$S$-curve} if $\dim_S X=1$. For a regular connected $S$-curve $C$, let $\bar C$ be the unique regular compactification of $C$ over $S$. Then the finite tame Galois covers of $C$ are exactly those \'{e}tale Galois covers which are tamely ramified along $\bar C \smallsetminus C$ in the classical sense (e.g.\ \cite{GM71}).
We recall the notion of curve-tameness from \cite{KeSch10}:

\begin{definition}
Let $Y\to X$ be a finite \'{e}tale cover of separated $S$-schemes of finite type. We say that $Y\to X$ is {\em curve-tame} if for any morphism $C\to X$ with $C$ a regular $S$-curve, the base change $Y\times_X  C \to  C$ is tamely ramified along $\bar C\smallsetminus C$.
\end{definition}

Now let $X$ be connected and choose a geometric point $\bar x$. We obtain the \emph{profinite curve-tame fundamental group} $\hat \pi_1^{ct}(X/S,\bar x)$, which is the quotient of $\hat \pi_1^\et(X,\bar x)$ that classifies finite curve-tame covers. This curve-tame fundamental group was considered in \cite{Sch-singhom}, \cite{KeSch09}, \cite{GS16}.

\bigskip
For a general base scheme $S$, our comparison result depends on local uniformization:
\begin{definition}
\label{locunif}Let~$S$ be a noetherian scheme. We say that \emph{local uniformization} holds over $S$, if for any integral scheme $X$ of finite type over $S$ with function field $k(X)$ and any $S$-valuation $v$ of $k(X)$ with center on $X$ there exists a connected, regular scheme $X'$ and a morphism $X'\to X$ inducing an isomorphism $k(X')\cong k(X)$ and such $v$ has center on $X'$.
\end{definition}

\begin{remark}
Local uniformization is a weaker property than resolution of singularities. Over fields of characteristic zero it was proven by Zariski \cite{Zar-loc}. Over fields of characteristic $p>0$, Temkin proved an inseparable variant in \cite{Tem-LU}, which was used in an essential way in \cite{quasi-purity} (which we use below).
\end{remark}

\begin{proposition}\label{pi1ct=pi1t} Let $S$ be an integral, pure-dimensional, separated and excellent scheme, $X\to S$ separated and of  finite type and $\bar{x}$ a geometric point of $X$. Assume that $S$ is the spectrum of a field or that local uniformization hold over $S$. Then the profinite curve-tame fundamental group coincides with the profinite tame fundamental group:
\[
\hat \pi_1^{t}(X/S,\bar x) \cong \hat\pi_1^{ct}(X/S,\bar x).
\]
\end{proposition}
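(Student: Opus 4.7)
The two profinite groups are both quotients of $\hat\pi_1^\et(X,\bar x)$, so under the Galois correspondence for $\FEt_X$ it suffices to show that, under the hypotheses of the proposition, a finite étale cover $Y \to X$ is tame in the sense of \cite{HS21} if and only if it is curve-tame. The proof then splits into two directions, of which the first is essentially formal and the second is where the hypothesis on $S$ enters.

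\emph{Tame implies curve-tame.} Let $Y \to X$ be tame and let $C \to X$ be a morphism with $C$ a regular connected $S$-curve and regular compactification $\bar C \to S$. For any closed boundary point $c \in \bar C \smallsetminus C$, the local ring $\O_{\bar C, c}$ is a discrete valuation ring and the inclusion $\Spec \O_{\bar C,c} \hookrightarrow \bar C \to S$ exhibits the associated valuation $v_c$ of $k(C)$ as an $S$-valuation. Pick $y' \in Y \times_X C$ above the generic point of $C$, extend $v_c$ to a valuation $w$ of $k(y')$, and restrict to $w_0 = w|_{k(y)}$, where $y \in Y$ is the image of $y'$; the factorization $\Spec k(y') \to \Spec k(y) \to Y \to X \to S$ shows that $w_0$ is itself an $S$-valuation. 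Tameness of $Y \to X$ at $w_0$ then, together with a standard base-change argument for tame extensions, yields tameness of $k(y')/k(C)$ at $v_c$, i.e., tame ramification of $Y \times_X C \to C$ along $\bar C \smallsetminus C$.

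\emph{Curve-tame implies tame.} This is the nontrivial direction. Assume $Y \to X$ is curve-tame, take $y \in Y$, and let $w$ be an $S$-valuation on $k(y)$ with restriction $v$ to $k(f(y))$; we must show $k(y)/k(f(y))$ is tame at $w$. Let $Z \subset X$ be the closure of $f(y)$ with its induced reduced $S$-scheme structure, so that $v$ is an $S$-valuation on $k(Z)$ centered on $Z$. Applying local uniformization over $S$ (Zariski's theorem when $S$ is the spectrum of a field) produces a birational morphism $Z' \to Z$ with $Z'$ regular, $k(Z') \cong k(Z)$ and $v$ centered at a point $z' \in Z'$. A Bertini-type argument through $z'$ on the regular scheme $Z'$ yields a regular $S$-curve $C \subset Z'$ passing through $z'$ whose discrete valuation at $z'$ dominates $v$ in the sense that tame ramification at $z' \in \bar C$ detects tame ramification at $v$. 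Curve-tameness applied to the composite $C \hookrightarrow Z' \to Z \hookrightarrow X$ then gives tameness of $Y \times_X C \to C$ along $\bar C \smallsetminus C$, and the quasi-purity results of \cite{quasi-purity} combined with the classical Abhyankar lemma promote this tameness at the discrete valuation $v_c$ on $C$ back to tameness of $k(y)/k(f(y))$ at the original valuation $w$.

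The principal obstacle is the construction of the curve $C \subset Z'$ that reliably detects $w$ and the passage back from the divisorial tameness datum on $C$ to valuation-theoretic tameness at $w$. For rank-one discrete valuations this is essentially a classical Bertini argument cutting out a regular curve through $z'$ with a generic complete intersection; in general $w$ may have arbitrary rank and non-discrete value group, and one argues by induction on the rank, decomposing $w$ as a composite of lower-rank valuations and iterating local uniformization. Transferring tame ramification through the resulting cascade of birational models is where the excellence of $S$ and the quasi-purity statements of \cite{quasi-purity} are crucially needed.
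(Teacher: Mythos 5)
Your first direction (tame $\Rightarrow$ curve-tame) matches the paper: both notions agree on regular $S$-curves and are stable under base change, so this part is fine. The second direction, however, has a genuine gap, and it is located exactly where you flag ``the principal obstacle.'' You argue directly: given curve-tameness and a valuation $w$, you want to build one regular curve $C$ through the center of $w$ on a uniformized model such that tameness of the cover along $\bar C\smallsetminus C$ \emph{implies} tameness at $w$. No such implication is available: a single curve through the center carries no control over the ramification of $k(y)/k(f(y))$ at an arbitrary (possibly high-rank, non-discrete) valuation $w$, and neither quasi-purity nor Abhyankar's lemma ``promotes'' tameness from a divisorial valuation on a curve back up to $w$. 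The purity theorems run in the opposite direction: they convert \emph{ramification somewhere} into \emph{ramification along a divisor}. Your sketch of an induction on the rank of $w$ does not repair this, because at each stage you would still need the same unavailable transfer of tameness from a curve to a valuation.

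The paper avoids this by arguing the contrapositive, and by first reducing to a cyclic cover of degree $p$ --- a reduction you omit and which is essential. Given a Galois cover $Y/X$ that is \emph{wildly} ramified at $w$, one picks a subgroup $A$ of order $p$ inside the ramification group $R_w(k(Y)/k(X))$ and replaces $X$ by $Y_A$, so that $Y/X$ becomes cyclic of degree $p$ and still ramified at $w$. Now (quasi-)purity of the branch locus (Zariski--Nagata on a regular model obtained by local uniformization in the general case, or \cite{quasi-purity} over a field) produces a \emph{divisorial} discrete rank-one $S$-valuation at which the cover is ramified --- and because the degree is $p$, ramified automatically means wildly ramified. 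Finally, the curve is not produced by a Bertini argument but by the Key Lemma 2.4 of \cite{KeSch10}, applied to the local ring of a closed point of the ramification divisor on a suitable normal compactification; this yields a morphism $C\to X$ along which the cover is wildly ramified at a boundary point of $\bar C$, contradicting curve-tameness. Without the contrapositive, the degree-$p$ reduction, and the Key Lemma, your argument does not close.
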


\begin{proof} If  $\ch(S)=0$, the statement of the proposition is true for trivial reasons, since both sides coincide with the profinite \'{e}tale fundamental group. Hence we can assume that $S$ has at least one point of positive residue characteristic.
We have to show that a finite \'{e}tale Galois cover of $X$ is tame if and only if it is curve-tame. Both notions coincide for regular $S$-curves and are stable under base change. Hence finite tame Galois covers are curve-tame and it remains to show the converse.

So let $f: Y\to X$ be a finite \'{e}tale Galois cover, $y\in Y$ a point, $x=f(y)$ and $w\in \Val_S k(y)$ such that $w$ is wildly ramified in $k(y)/k(x)$. We have to find an $S$-curve $C$ and a morphism $\varphi: C\to X$ such that $C\times_XY\to C$ is wildly ramified at some point in $\bar C \smallsetminus C$.

Let $X'$ be the closure of $\{x\}$ in $X$ with reduced scheme structure and $Y'=X'\times_XY$. In order to find $\varphi: C\to X$, we can replace $X$ by $X'$ and $Y$ by the connected component of $y$ in $Y'$, i.e., we may assume that $x$ and $y$ are the generic points of the integral schemes $X$ and $Y$.
Replacing $X$ by its normalization $\widetilde X$ and $Y$ by its base change along $\widetilde X\to X$, we may assume that $X$ and $Y$ are normal.

\medskip
We first assume that $\Gal(Y/X)$ is cyclic of order $p:=\ch k(w)$. Then, since $w$ is ramified, $k(y)/k(x)$ is also of degree $p$.

If $S=\Spec (k)$ for a field $k$ (necessarily of characteristic $p$), then, by quasi-purity of the branch locus \cite{quasi-purity}, we find a geometric discrete rank-one $S$-valuation $v$ on $k(y)$ that is ramified in $k(y)/k(x)$ and specializes to $w$.

In the general case, assume that local uniformization holds over $S$. Let $\bar X$ be a proper (over $S$) normal compactification of $X$ and  let $z \in \bar{X}$ be the center of~$w$.   We choose a connected, regular scheme $X'$ with a morphism $X'\to \bar X$ inducing an isomorphism $k(X')\cong k(X)$ and such $w$ has center $z'$ on $X'$ and let $Y'$ be the normalization of $X'$ in $k(Y)$. Then $Y'/X'$ ramifies over $z'$ and by the Zariski-Nagata theorem on the purity of the branch locus we find a ramified divisor on $X'$ which contains $z'$. This gives us  a geometric discrete rank-one $S$-valuation $v$ on $k(Y)$ that is ramified in $k(Y)/k(X)$.

In both cases,  by \cite[\S 8,\, Theorem\,3.26 and Exercise 3.14]{Liu},
we can find a normal compactification (over $S$) $\bar X$ of $X$ such that the center $D$ of $v$ on $\bar X$ is of codimension one.  Applying  the Key Lemma 2.4 of \cite{KeSch10} to the local ring of some closed point of $\bar X$ contained in $D$ and with residue characteristic $p$, we obtain a morphism $\varphi: C\to X$ with the property that $C\times_XY\to C$ is ramified (hence wildly ramified) at this point.

\smallskip\noindent
It remains to reduce the general case to the cyclic-order-$p$-case. For this let
\[
A\subset R_w(k(Y)/k(X)) \subset \Gal(k(Y)/k(X))
\]
be a subgroup of order $p$ (here $R_w$ denotes the ramification group) and let $X'=Y_A$. Then $Y/X'$ is of degree $p$ and wildly ramified at $w$. By the first part of the proof, we find a morphism $\varphi: C \to X'$ with the required property. Then the composite of $\varphi$ with the projection $X'\to X$ yields what we need.
\end{proof}

\section{Locally constant tame sheaves}

For an abelian group $A$ we denote by $\underline{A}$ the Zariski sheafification of the constant presheaf associated with $A$ on the category $X_\et$. It is an \'{e}tale sheaf, in particular a tame sheaf.

\begin{definition}
A sheaf of abelian groups $\cF\in \Sh_t(X/S)$ is \emph{locally constant} if there is a tame covering $(U_i \to X)$ such that the restriction $\cF|_{U_i}$ is isomorphic to a constant sheaf $\underline{A_i}$ on $U_i$ for all $i$.
\end{definition}
Since constant sheaves are \'{e}tale sheaves, the same is true for locally constant sheaves by \cref{etale sheaf} below.

\begin{lemma}\label{etale sheaf}
Let $\cF\in \Sh_t(X/S)$ and assume there is a tame covering $(U_i \to X)$ such that the restrictions of $\cF$ to all $U_i$ are \'{e}tale sheaves. Then $\cF$ is an \'{e}tale sheaf.
\end{lemma}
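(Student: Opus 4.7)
The plan is to verify the étale sheaf condition for $\cF$ directly. Given an étale cover $(V_j \to V)_j$ with $V \to X$ étale, one must show the equalizer sequence
\[
\cF(V) \to \prod_j \cF(V_j) \rightrightarrows \prod_{j,k} \cF(V_j \times_V V_k)
\]
is exact. Base-changing the given tame cover $(U_i \to X)_i$ produces tame covers $(U_i \times_X V \to V)_i$ of $V$, and likewise of each $V_j$ and of each $U_i \times_X U_{i'} \times_X V$ (stability of the tameness condition under base change being immediate from the definition, since an $S$-valuation on the base change maps to one downstairs). Set $W_i = U_i \times_X V$.

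The key technical remark is that for any further étale morphism $W \to U_i$, the restriction $\cF|_W$ is again an étale sheaf, because pulling back an étale sheaf along an étale morphism keeps it a sheaf in the étale topology. Thus $\cF$ restricted to each $U_i \times_X V$ and to each $U_i \times_X U_{i'} \times_X V$ is an étale sheaf, so étale descent is available at every stage of the double cover. I would then treat injectivity and gluing in turn. For injectivity, a section $s \in \cF(V)$ with $s|_{V_j} = 0$ for all $j$ restricts to a section of $\cF$ on $W_i$ vanishing along the étale cover $(U_i \times_X V_j \to W_i)_j$; étale descent of $\cF|_{U_i}$ forces $s|_{W_i} = 0$, and tame descent of $\cF$ along $(W_i \to V)_i$ then forces $s = 0$.

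For gluing, I take a compatible family $(s_j)_j$, apply étale descent of $\cF|_{U_i}$ to glue $(s_j|_{U_i \times_X V_j})_j$ to a section $t_i \in \cF(W_i)$, and check that $t_i$ and $t_{i'}$ agree on $W_i \times_V W_{i'} = U_i \times_X U_{i'} \times_X V$. Since $\cF$ restricted there is an étale sheaf, equality can be tested on the étale cover $(U_i \times_X U_{i'} \times_X V_j)_j$, where both pullbacks restrict to $s_j$ by construction. Tame descent on $V$ then assembles the $(t_i)_i$ into a section $s \in \cF(V)$; to verify $s|_{V_j} = s_j$, I pass to the tame cover $(U_i \times_X V_j \to V_j)_i$ and use the corresponding identity for each $t_i$. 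The main potential obstacle is bookkeeping: I must invoke étale descent of $\cF$ at three different levels (on each $U_i$, on each pairwise overlap $U_i \times_X U_{i'}$, and implicitly on the covers of $V_j$), but the \emph{same} principle — that restriction of an étale sheaf to an étale open remains an étale sheaf — makes each invocation routine.
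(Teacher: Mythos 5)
Your argument is correct and is essentially the paper's own proof: both run the same three-stage descent through the double cover $(U_i\times_X V_j)_{i,j}$, using tame descent along $(U_i\times_X V\to V)_i$ and étale descent of $\cF|_{U_i}$, with your version merely spelling out the injectivity and gluing steps that the paper compresses into one sentence. The only detail worth keeping explicit is the one you already noted — that $\cF$ restricted to anything étale over some $U_i$ (in particular to $U_i\times_X U_{i'}\times_X V$) remains an étale sheaf — which is what licenses testing equality of the $t_i$ on overlaps.
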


\begin{proof}
Let $U\in X_\et$ and let $(U_j \to U)_j$ be an \'{e}tale covering. Then $(U_i \times_X U \to U)_i$ is a tame covering, $(U_i \times_X U_j \to U_i\times_X U)_j$ is an \'{e}tale covering for all $i$ and $(U_i\times_X U_j\to U_j)_i$ is a tame covering for all $j$. Giving a section of $\cF$ over $U$ is therefore equivalent to give compatible sections of $\cF$ over all $U_i \times_X U$, which is equivalent to give compatible sections of $\cF$ over all $U_{i}\times_X U_j$ which is equivalent to give compatible sections of $\cF$ on all $U_j$.
\end{proof}

\begin{remark}
We obtain the inclusions:
\[
\{\text{l.c.~tame sheaves}\} \subset \{\text{l.c.~\'{e}tale sheaves}\} \subset \{\text{\'{e}tale sheaves}\} \subset \{\text{tame sheaves}\}.
\]
\end{remark}

\begin{proposition} \label{finite_group-scheme} Let $X$ be a connected $S$-scheme. A locally constant tame sheaf $\cF$ of abelian groups on $(X/S)_t$ has finite stalks if and only if it is represented by an abelian group scheme $G\to X$ that is a finite tame cover of $X$.
\end{proposition}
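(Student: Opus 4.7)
The plan is to prove the two implications separately, using the Galois category $\FT_{X/S}$ from \cref{finite-pi1-sets} together with tame descent.

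For the implication from right to left, suppose $G \to X$ is a finite tame cover carrying an abelian group scheme structure. Then $h_G$ is an \'{e}tale sheaf of abelian groups, hence also a tame sheaf. To see that its stalks are finite, I would observe that for any tame point $(\bar x, \bar v)$ the base change $G \times_X \Spec k(\bar x)$ is a finite \'{e}tale cover of $\Spec k(\bar x)$ that remains tame at $\bar v$; since by definition $k(\bar x)$ admits no nontrivial tame extension at $\bar v$, this cover splits into a finite disjoint union of copies of $\Spec k(\bar x)$. For local constancy, I would pass to a Galois envelope $\tilde G \to X$ of $G$ inside $\FT_{X/S}$. Such a $\tilde G$ is itself a finite tame cover, hence a tame covering of $X$ by the remark preceding \cref{finite-pi1-sets}, and standard Galois theory splits $G \times_X \tilde G$ into a disjoint union of copies of $\tilde G$, so $h_G$ becomes constant on $\tilde G$.

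Conversely, if $\cF$ is locally constant tame with finite stalks, I would fix a trivializing tame covering $(U_i \to X)$ with $\cF|_{U_i} \cong \underline{A_i}$ and each $A_i$ finite. The constant sheaf $\underline{A_i}$ is represented on $U_i$ by the finite \'{e}tale abelian group scheme $A_i \times U_i \to U_i$, and the tame descent datum for $\cF$ is \emph{a fortiori} an \'{e}tale descent datum, since every tame covering is an \'{e}tale covering. Effective descent for finite \'{e}tale morphisms along \'{e}tale covers then glues these local pieces to a finite \'{e}tale abelian group scheme $G \to X$ with $h_G \cong \cF$; the group structure descends because the gluing isomorphisms come from sheaf isomorphisms of the abelian sheaf $\cF$.

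To verify that $G \to X$ is itself a finite tame cover, I would use that base change of a tame covering along an \'{e}tale morphism is tame, so $(G \times_X U_i \to G)_i$ is a tame covering of $G$. Hence for any $(y, w) \in \Val_S G$ with image $x \in X$ there exist $i$ and $(\tilde y, \tilde w) \in \Val_S(G \times_X U_i)$ above $(y, w)$ with $\tilde w/w$ tame. Via the trivialization $G \times_X U_i \cong A_i \times U_i$, the composite $G \times_X U_i \to U_i \to X$ is an unramified morphism followed by the tame morphism $U_i \to X$, hence itself tame; in particular $k(\tilde y)/k(x)$ is tame at $\tilde w$. Multiplicativity of the strict-henselian degree in the tower $k(x) \subset k(y) \subset k(\tilde y)$ then forces $k(y)/k(x)$ to be tame at $w$. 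The main obstacle I expect is precisely this last tameness verification: it relies on stability of tame coverings under base change along \'{e}tale morphisms and under composition --- both of which should be available from \cite{HS21} --- together with the multiplicativity of the strict-henselian degree in towers for general, not necessarily discrete, valuations.
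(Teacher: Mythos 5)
Your overall strategy matches the paper's: both directions come down to \'{e}tale descent (the paper instead observes via \cref{etale sheaf} that $\cF$ is an \'{e}tale sheaf and quotes Milne V, Prop.~1.1) plus a verification that the representing finite \'{e}tale scheme $G\to X$ is a tame cover; your treatment of the ``if'' direction and of the descent step is fine. The gap is in the final tameness verification. You first lift $(y,w)$ to some $(\tilde y,\tilde w)$ on $G\times_X U_i$ with $\tilde w/w$ tame, and then assert that the composite $G\times_X U_i\to U_i\to X$ is tame because $U_i\to X$ is ``the tame morphism''. But an individual member of a tame covering need not be tame in that sense: the definition only demands that for each $(x,v)\in\Val_S X$ there exist \emph{some} index $i$ and \emph{some} lift $(u_i,v_i)$ with $v_i/v$ tamely ramified; other lifts --- in particular the point of $U_i$ with the valuation induced by your $(\tilde y,\tilde w)$ --- may be wildly ramified over $(x,v)$. (One can enlarge any tame covering by a wildly ramified \'{e}tale cover without destroying the covering property.) Since knowing $\tilde w/w$ tame says nothing about $w/v$, your tower argument has no tame top extension to restrict from, and the step ``in particular $k(\tilde y)/k(x)$ is tame at $\tilde w$'' is unjustified.

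The repair is to reverse the order of choices, which is what the paper does: starting from $(x,v)$, use the covering property of $(U_i\to X)$ to pick $(u_i,v_i)$ with $v_i/v$ tame, and only then choose a point $y_i$ of $G\times_X U_i$ lying over both $y$ and $u_i$, together with a valuation $w_i$ extending both $w$ and $v_i$ (possible since $k(y)\otimes_{k(x)}k(u_i)\neq 0$ and two extensions of $v$ admit a common extension to a composite field). Because the component of $y_i$ in $G\times_X U_i\cong A_i\times U_i$ maps isomorphically onto $U_i$, one obtains an embedding $k(y)\hookrightarrow k(u_i)$ with $v_i|_{k(y)}=w$, so $w/v$ is a subextension of the tame extension $v_i/v$ and hence tame. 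With this reordering your preliminary step of base changing the covering to $G$ becomes unnecessary.
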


\begin{proof} The if part is obvious. Let $\cF$ be a locally constant tame sheaf with finite stalks. By \cref{etale sheaf}, $\cF$ is an \'{e}tale sheaf. Hence, by \cite[V,\,Proposition 1.1]{Milne80}, $\cF$ is represented by an abelian finite  \'{e}tale group scheme $f: G\to X$. Since $G$ is a $G$-torsor, it is trivialized by itself, i.e., $\cF|_G$ is a constant sheaf. It remains to show that $f: G \to X$ is a tame cover, i.e., that for every $y\in G$ and $w\in \Val_S k(y)$ the extension $k(y)/k(f(y))$ is at most tamely ramified at $w$.

Since $\cF$ is locally constant, there exists a tame covering $(U_i \to X)_i$ such that $G\times_X U_i \to U_i$ is constant for all $i$. Now let $y\in G$ and $w\in \Val_S k(y)$, $x=f(y)\in X$ and $v=w|_{k(x)}$. We can find an index $i$, $u_i \in U_i$, $v_i \in \Val_S  k(u_i)$ such that $v_i/v$ is at most tamely ramified. Now we pick a point $y_i \in G\times_X U_i$ and $w_i\in \Val_S k(y_i)$ mapping to $(u_i,v_i)$ and $(y,w)$. Since $G\times_X U_i \to U_i$ is constant, we obtain an inclusion $k(y) \subset k(u_i)$ such that $v_i|_{k(y)}  =w$. Hence $w/v$ is dominated by $v_i/v$ and hence tame.
\end{proof}

Combining \cref{finite_group-scheme} with the equivalence of \cref{finite-pi1-sets}, we obtain

\begin{proposition} Let $X$ be a connected $S$-scheme and let $(\bar x, \bar v)$ be a tame point of\/ $X/S$. Then there is a natural equivalence  between the category of finite discrete $\hat \pi_1^t(X/S,(\bar x,\bar v))$-modules and the category of sheaves on $(X/S)_t$ which are locally constant and have finite stalks.
\end{proposition}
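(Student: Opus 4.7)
The plan is to derive this as a formal consequence of Propositions \ref{finite-pi1-sets} and \ref{finite_group-scheme}, observing that both equivalences preserve abelian group object structure.

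First, I would pass from sheaves to schemes. By \cref{finite_group-scheme}, taking a locally constant tame sheaf $\cF$ with finite stalks to its representing scheme $G\to X$ yields an equivalence between the category of such sheaves and the category of abelian group objects in $\FT_{X/S}$ (the morphism $G\to X$ is automatically a finite tame cover, and the sheaf structure $\cF=\Hom_X(-,G)$ transports the abelian structure on $G$ to $\cF$ and conversely). In particular, sheaf homomorphisms correspond to $X$-scheme homomorphisms that respect the group structures.

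Next, I would pass from schemes to $\hat\pi_1^t$-sets via the fibre functor. By \cref{finite-pi1-sets}, $F_{(\bar x,\bar v)}$ is an equivalence between $\FT_{X/S}$ and finite discrete $\hat\pi_1^t(X/S,(\bar x,\bar v))$-sets. Any equivalence of categories with finite products preserves abelian group objects; since $F_{(\bar x,\bar v)}$ is such an equivalence (products in both categories are honest products, computed as fibre products over $X$ respectively as products with diagonal $\hat\pi_1^t$-action), it induces an equivalence between abelian group objects in $\FT_{X/S}$ and abelian group objects in finite discrete $\hat\pi_1^t$-sets, i.e., finite discrete $\hat\pi_1^t$-modules.

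Composing these two equivalences gives the desired natural equivalence. The only point to verify carefully, which I expect to be the main (though minor) technicality, is that the composite is indeed \emph{natural} in the expected sense, namely that the finite $\hat\pi_1^t$-module attached to $\cF$ coincides with the stalk $\cF_{\bar x}$ equipped with its $\hat\pi_1^t$-action. For this one checks that if $G\to X$ represents $\cF$, then
\[
\cF_{\bar x}=\Gamma(\Spec k(\bar x),\bar x^*\cF)=\Hom_X(\Spec k(\bar x),G)=\pi_0(G\times_X\bar x)=F_{(\bar x,\bar v)}(G),
\]
using that $G\to X$ is étale and $k(\bar x)$ is separably closed with respect to $\bar v$; the identification is one of abelian groups because all the arrows involved are compatible with the group law on $G$, and it is equivariant for the monodromy action of $\hat\pi_1^t$ by construction of the fibre functor.
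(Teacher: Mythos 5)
Your proposal is correct and matches the paper's argument exactly: the paper derives this proposition in one line by combining \cref{finite_group-scheme} with \cref{finite-pi1-sets}, and your write-up simply makes explicit the standard fact that these equivalences carry abelian group objects to abelian group objects and that the stalk at $(\bar x,\bar v)$ agrees with the fibre functor on the representing tame cover. The details you supply (including the splitting of $G\times_X\bar x$ over the tame point) are the right ones.
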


\section{\'{E}tale and tame homotopy groups} \label{hgsect}
\medskip
In \cite{AM}, Artin and Mazur defined higher \'{e}tale homotopy groups. The construction is the following:

A category $C$ admitting finite fibre products is called distributive if it has an initial object $\varnothing$, and if the following condition holds: For every family of objects $Y_i$, $i\in I$, such that the coproduct $\coprod_{i\in I} Y_i$ exists in $C$, any family of morphisms $Y_i\to S$ and for any morphism $X\to S$, the canonical morphism $\coprod_{i\in I} X \times_S Y_i \to X \times_S (\coprod_{i\in I} Y_i)$ is an isomorphism.

An object  $X$ of a distributive category $C$ is called connected if it is not the initial object $\varnothing$ and has no non-trivial copro\-duct decomposition. The category $C$ is called locally connected if every object has a copro\-duct decomposition into connected objects. This decomposition  is essentially unique then, so we can speak about the set of connected components of an object of $C$.
The rule associating to an object its set of connected components is then a functor, denoted by
\[
\Pi: C \lang \sets.
\]
Assume in addition that $C$ is a pointed site.
We consider the category $\textrm{HR}(C)$, whose objects are pointed hypercoverings of $C$ and whose maps are homotopy classes of morphisms. The category $\textrm{HR}(C)$ is left filtering \cite[Corollary 8.13]{AM}.
By applying the connected components functor $\Pi$ to $\textrm{HR}(C)$,  we obtain a pro-object
\[
\Pi C =(\pi(K_\bullet))_{K_\bullet}
\]
in the homotopy category of pointed simplicial sets. The homotopy pro-groups of $C$ are defined by
\[
\pi_q(C)= \pi_q(\Pi C).
\]
For a locally noetherian $S$-scheme $X$, the above applies to the sites $X_\et$ and $(X/S)_t$ and one puts for a geometric point $\bar{x}$ of $X$, resp.\ a tame point $\bar x=(\bar x, \bar v)$ of $X/S$:
\[
\pi_q^\et(X,\bar{x})=\pi_q(X_\et, \bar x), \quad \pi_q^t(X/S,(\bar{x},\bar{v}))=\pi_q((X/S)_t, (\bar x,\bar v)).
\]
The following \cref{tame-homotop} verifies properties of the tame homotopy groups that were proven in \cite{AM} and \cite{Frie82} in the \'{e}tale case.
\begin{theorem} \label{tame-homotop} Let $X$ be a locally noetherian $S$-scheme and $(\bar x,\bar v)$  a tame point of $X/S$. Then the following holds.
\begin{enumerate}[\rm (i)]
 \item $ \pi_0^t(X/S,(\bar{x},\bar v))$ is the set $\pi_0(X)$ of connected components of $X$, pointed in the component of $\bar{x}$.
 \item If $X_0$ is the connected component of $\bar x$, then the natural homomorphism
 \[
 \pi_q((X_0/S)_t, (\bar x,\bar v)) \lang \pi_q((X/S)_t, (\bar x, \bar v))
 \]
 are isomorphisms of pro-groups for all $q\ge 1$.
  \item There is a natural homomorphism of pro-groups
  \[
  \pi_1^t(X/S,(\bar{x},\bar v)) \lang   \hat \pi_1^t(X/S,(\bar{x},\bar v))
  \]
  which is universal for homomorphisms of $\pi_1^t(X/S,(\bar{x},\bar v))$ to profinite groups. In other words, $\hat \pi_1^t(X/S,(\bar{x},\bar v))$ is the profinite completion of $\pi_1^t(X/S,(\bar{x},\bar v))$.
  \item  Assume that $X$ is noetherian and geometrically unibranch (e.g., normal) and that $S$ is quasi-compact and quasi-separated. Then the pro-groups
  \[
   \pi_q^t(X/S,(\bar{x},\bar v))
  \]
  are profinite for all $q\ge 1$.
\end{enumerate}
\end{theorem}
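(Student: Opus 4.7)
The overall strategy is to verify, in order, that each part is the tame-site analogue of a property established by Artin--Mazur \cite{AM} and Friedlander \cite{Frie82} in the \'{e}tale case. The necessary tame ingredients are all in place: the structural theorem \cref{mainproperty} and the equivalence between finite discrete $\hat\pi_1^t$-sets and locally constant tame sheaves with finite stalks.

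For (i) and (ii) the crucial observation is that the underlying distributive category of $(X/S)_t$ is the locally connected category $X_\et$. For any pointed hypercovering $K_\bullet$ of $(X/S)_t$, the set $\pi_0\Pi(K_\bullet)$ is the coequalizer of $\Pi(K_1)\rightrightarrows \Pi(K_0)$, and since $K_0 \to X$ is surjective on points, this coequalizer is canonically identified with $\pi_0(X)$; a passage to the pro-limit gives (i). For (ii), a pointed hypercovering of $X$ decomposes canonically as the disjoint union of a pointed hypercovering $K_\bullet^0$ of $X_0$ and an unpointed hypercovering of $X\smallsetminus X_0$, and these pointed hypercoverings of $X_0$ are cofinal in the indexing category used for $\pi_q((X_0/S)_t,(\bar x,\bar v))$. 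After applying $\Pi$ the resulting pointed simplicial sets decompose accordingly, so their higher homotopy groups at the basepoint depend only on the $X_0$-piece.

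For (iii) we run the standard Galois-category argument: $\pi_1 \Pi$ of a pointed locally connected site is the pro-group whose finite discrete quotients classify locally constant sheaves of finite sets on the site. By the proposition immediately preceding the theorem, the category of such sheaves on $(X/S)_t$ is equivalent to the category of finite discrete $\hat\pi_1^t(X/S,(\bar x,\bar v))$-sets. Comparing universal properties forces $\hat\pi_1^t(X/S,(\bar x,\bar v))$ to be the profinite completion of $\pi_1^t(X/S,(\bar x,\bar v))$.

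The substantial point is (iv), where I would adapt Friedlander's approach to the tame topology. The aim is to produce a cofinal system of pointed hypercoverings $K'_\bullet$ such that each $\pi(K'_n)$ is finite; then $\pi_q^t(X/S,(\bar x,\bar v))$ is represented by a pro-system of simplicial finite sets, hence is pro-finite. Three ingredients make this possible. First, since $X$ is noetherian and geometrically unibranch, every \'{e}tale $X$-scheme of finite type has finitely many connected components, each itself geometrically unibranch, so that $\Pi$ sends such schemes into finite sets (cf.\ \cite[EGA IV, 18.10.7]). Second, \cref{mainproperty}(iv), applied under the stated quasi-compactness hypotheses, lets us refine any tame covering appearing in the construction of a hypercovering by a finite tame covering, which preserves noetherianness at each level. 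Third, the refinement can be carried out inductively in the simplicial degree using that fibre products of noetherian \'{e}tale schemes remain noetherian and hence have finitely many connected components. The main obstacle is managing this simplicial-level induction so that the refined hypercoverings are cofinal and compatible with the basepoint, which is exactly the content of Friedlander's construction transported to the tame site.
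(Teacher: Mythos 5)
Parts (i)--(iii) of your argument are essentially the paper's. Your coequalizer description of $\pi_0(\pi(K_\bullet))$ dualizes the paper's computation with the constant sheaf $\underline{S}$; just note that surjectivity of $K_0\to X$ only gives surjectivity of $\pi_0(\pi(K_\bullet))\to\pi_0(X)$, while injectivity needs the covering condition on $K_1$ over $K_0\times_X K_0$ --- which is exactly what the sheaf axiom for $\underline{S}$ encodes in the paper's version. Part (iii) is the torsor-classification argument via \cite[Corollary 10.7]{AM} in slightly different packaging, and (ii) matches the paper.

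The genuine gap is in (iv). You propose to produce a cofinal system of hypercoverings $K'_\bullet$ with each $\pi(K'_n)$ finite and then conclude that the homotopy pro-groups are pro-finite because they are ``represented by a pro-system of simplicial finite sets.'' That inference is false: a simplicial set with finitely many simplices in each degree can have infinite homotopy groups (e.g.\ $\partial\Delta^3$, a model of the $2$-sphere, has $\pi_2\cong\Z$), so a pro-system of levelwise finite simplicial sets need not have pro-finite homotopy pro-groups (take the constant pro-system on $\partial\Delta^3$). Relatedly, you use geometric unibranchness only to guarantee that $\pi(K'_n)$ is finite; but every noetherian scheme has finitely many connected components, so on your reading that hypothesis would be doing no work --- a sign that it must be needed elsewhere. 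The paper's proof uses \cref{mainproperty}(iv) only to see that the noetherian hypercoverings are cofinal, and then invokes \cite[Theorem 11.1]{AM}, whose content is precisely that for a noetherian, geometrically unibranch scheme the simplicial sets $\pi(K_\bullet)$ attached to noetherian \'{e}tale hypercoverings have finite homotopy groups in \emph{all} degrees; there unibranchness enters essentially (connected \'{e}tale schemes over such an $X$ are controlled by finite \'{e}tale covers), not merely through component counts. You need to import that theorem, observing that tame hypercoverings are in particular \'{e}tale hypercoverings; levelwise finiteness alone will not close the argument.
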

\begin{proof} (i) We show that for any tame hypercovering $K_\bullet \to X$ the natural map $\pi_0(\pi(K_\bullet))\to \pi_0(X)$ is a bijection. This shows that $ \pi_0^t(X/S,(\bar{x},\bar v))$ is the constant pro-object associated with the set $\pi_0(X)$ (pointed in the component of $\bar{x}$).  By Yoneda, it suffices to show that for any set $S$ the induced map
\[
S^{\pi_0(\pi(K_\bullet))} \lang S^{\pi_0(X)}
\]
is bijective. Let $\underline{S}$ be the constant tame sheaf of sets associated with $S$. Then $\Gamma(Y,\underline{S})=S^{\pi_0(Y)}$ for every $Y\in X_\et$. Similarly, we obtain
$\Gamma(Y_\bullet,\underline{S})=S^{\pi_0(\pi(Y_\bullet))}$ for every  simplicial object $Y_\bullet$ of $X_\et$. For a hypercovering $K_\bullet \to X$, the exact sequence
\[
\Gamma(K_\bullet, \underline{S}) \to \Gamma(K_0, \underline{S}) \rightrightarrows \Gamma(K_1, \underline{S}),
\]
and the sheaf property of $\underline{S}$ show $\Gamma(K_\bullet, \underline{S})= \Gamma(X,\underline{S})$, hence $S^{\pi_0(\pi(K_\bullet))} = S^{\pi_0(X)}$.

\noindent
(ii) This follows since the functor $\pi$ commutes with disjoint unions and because the homotopy groups of a pointed simplicial set only depend on the connected component of the base point.

\noindent
(iii) Recall that the category of profinite groups is the pro-category of the category of finite groups. Moreover, recall that any pro-object $X=(X_i)_{i\in I}$ of a category $\cC$ defines the covariant functor
\[
\Hom(X,-): \cC \lang \sets, \ Y \longmapsto \colim_i \Hom(X_i,Y)
\]
and that this construction identifies the category $\pro\cC$ with the full subcategory of pro-representable covariant functors from $\cC$ to $\sets$. For a (discrete) group $G$ (considered as a constant group scheme over $X$), we call a $G$-torsor $T\to X$ tame, if $T\to X$ is a tame covering.  Let $\pi^1_t(X/S,G,(\bar{x},\bar v))$ denote the set of isomorphism classes of pointed (over $(\bar x,\bar v)$) tame $G$-torsors. The profinite tame fundamental group $\hat\pi_1^t(X/S,(\bar{x},\bar{v}))$ represents the functor
\[
\textrm{finite groups} \lang \sets, \ G \longmapsto \pi^1_t(X/S,G,(\bar{x},\bar{v}) ).
\]
On the other hand, by \cite[Corollary 10.7]{AM}, the pro-group $\pi_1^t(X/S,(\bar{x},\bar{v}))$ represents the functor
\[
\textrm{groups} \lang \sets, \ G \longmapsto \pi^1_t(X/S,G,(\bar{x},\bar{v})).
\]
From this the assertion of (iii) follows formally.

\noindent (iv) Since $X$ is noetherian, by \cref{mainproperty}(iv),  the noetherian hypercoverings (i.e., those $K_\bullet \to X$ with $K_q$ noetherian for all $q$) are cofinal among all hypercoverings of $X$. Therefore it suffices to show that for every noetherian tame hypercovering $K_\bullet \to X$ all homotopy groups of the simplicial set $\pi(K_\bullet)$ are finite. Since tame hypercoverings are \'{e}tale hypercoverings, this follows from \cite[Theorem 11.2]{AM}.
\end{proof}

\section{Homotopy invariance of tame homotopy groups}
\begin{definition}
\label{ros}Let~$S$ be a noetherian scheme.
 We say that \emph{resolution of singularities holds over~$S$} if for any reduced scheme~$X$ of finite type over~$S$ there is a locally projective birational morphism $X' \to X$ such that~$X'$ is regular and $X' \to X$ is an isomorphism over the regular locus of~$X$.  (By \cite[IV, 7.9.5]{EGAIV.2}, this particularly implies that $S$ is quasi-excellent.)
 \end{definition}

 The following theorem was proven in \cite[Theorem 15.2]{HS21}:

 \begin{theorem}[Homotopy invariance of tame cohomology] \label{homotopyinvariancecoh}
 Let~$S$ be an affine noetherian scheme of characteristic $p > 0$ and $X$ a regular scheme which is essentially of finite type over~$S$.
 Assume that resolution of singularities holds over~$S$.
 Then for every locally constant torsion sheaf $F\in \Sh_t(X/S)$ the natural map
 \[
   H^q_t(X/S,F) \lang H^q_t(\A^1_X/S,\pr^*F),
 \]
where $\pr: \A^1_X \to X$ is the natural projection, is an isomorphism for all $q\ge 0$.
\end{theorem}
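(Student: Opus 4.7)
\textbf{Proof plan for \cref{homotopyinvariancecoh}.}

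The natural first move is a dévissage on the coefficient sheaf. Since $F$ is locally constant in the tame topology with torsion stalks, by \cref{finite_group-scheme} it is represented by an abelian finite tame cover $G \to X$. Passing to the Hochschild--Serre spectral sequence for this Galois cover (pulled back to $\A^1_X$), I would reduce the claim to the case where $F$ is a constant finite abelian sheaf $\underline{M}$. Decomposing $M$ into its primary parts, I would further reduce to the case $F = \underline{\Z/\ell}$ with $\ell$ prime, and split the argument into the prime-to-$p$ case and the $p$-primary case.

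For $\ell$ invertible on $S$ (in particular $\ell \ne p$ when we work over characteristic $p$), I would invoke \cref{mainproperty}(ii): tame cohomology with $\Z/\ell$-coefficients agrees with étale cohomology with $\Z/\ell$-coefficients. Classical $\A^1$-homotopy invariance of étale cohomology for torsion coefficients invertible on the base (Gabber, building on SGA 4 XV) then supplies the isomorphism. This part needs no hypothesis on resolution of singularities and no regularity beyond what is available for the étale version.

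The genuine difficulty is the $p$-primary case, where étale cohomology fails to be $\A^1$-invariant already for $\underline{\Z/p}$ (Artin--Schreier produces nontrivial $H^1_\et(\A^1_k,\Z/p) = k/\wp(k)$). The idea is that the Artin--Schreier covers are all wildly ramified at $\infty$, hence are killed by the tame topology on $\A^1_X/S$. To exploit this, I would compactify $\A^1_X \hookrightarrow \P^1_X$ and try to write the tame cohomology of $\A^1_X/S$ as a suitable subspace of the étale cohomology of $\P^1_X$ computed with a pushforward of $\pr^*F$. Resolution of singularities enters to build compactifications of $X$ and of $\A^1_X$ whose boundaries are normal crossing divisors, so that one can describe tame ramification along the boundary divisorially. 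Proceeding by induction on $\dim X$ (or on the dimension of the support of a stratification), with the base case being a field or a complete regular local ring, I would reduce the global statement to a local statement in a DVR situation, where one must verify that the tame cohomology of $\A^1$ over a strictly henselian trait vanishes in positive degrees for $\underline{\Z/p}$; this is the manifestation of the fact that nontrivial Artin--Schreier covers are wild at infinity. A filtered colimit argument via \cref{mainproperty}(v) and (vi) allows passage from essentially of finite type to finite type and from excellent local rings to their completions.

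The principal obstacle will be the $p$-primary local computation: controlling tame $H^q_t(\A^1_X/S,\underline{\Z/p})$ for $X$ the spectrum of a regular local ring, where no vanishing analog of \cref{mainproperty}(ii) is available. Resolution of singularities is what makes this tractable, by providing good compactifications along which one can trade arbitrary étale covers for ones whose ramification is supported on a normal crossing divisor and whose tameness can then be tested divisor-by-divisor. Once the local vanishing is in hand, the proper base change and limit results of \cref{mainproperty} glue the pieces together to yield the global isomorphism for all $q \ge 0$.
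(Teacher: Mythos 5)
First, note that this paper does not actually prove \cref{homotopyinvariancecoh}: it is imported verbatim as \cite[Theorem 15.2]{HS21}, so the real proof lives there and is substantially longer than anything sketched here. Measured against what such a proof must contain, your plan has the right architecture in its easy parts but a genuine gap at exactly the point you flag as ``the principal obstacle''. The reductions you describe are fine modulo small repairs: a locally constant \emph{torsion} sheaf need not have finite stalks, so before invoking \cref{finite_group-scheme} you must write $F$ as a filtered colimit of locally constant subsheaves with finite stalks and commute the colimit through cohomology via \cref{mainproperty}(v); the Hochschild--Serre reduction to constant coefficients requires passing to a Galois closure and noting that the trivializing cover is again regular and essentially of finite type over $S$; and the prime-to-$p$ case does follow from \cref{mainproperty}(ii) together with the acyclicity of $\A^1$ in \'{e}tale cohomology for coefficients invertible on the base (after checking the routine compatibility of $\alpha_*,\alpha^*$ with $\pr^*$).

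The $p$-primary part, however, is not a proof but a restatement of the difficulty. The statement one must establish is stalkwise vanishing of $R^q\pr_*\underline{\Z/p}$ at an arbitrary tame point $(\bar x,\bar v)$, i.e.\ $H^q_t(\A^1_{k(\bar x)}/\Spec\O_{\bar v},\underline{\Z/p})=0$ for all $q>0$, where $k(\bar x)$ is strictly henselian with respect to a valuation $\bar v$ of possibly large rank and has pro-$p$ absolute Galois group. Your Artin--Schreier heuristic (``nontrivial $\Z/p$-covers of $\A^1$ are wild at infinity'') addresses at most $q=1$; it says nothing about $H^q$ for $q\ge 2$, which is where the tame cohomology of valuation rings, the comparison with Riemann--Zariski spaces, and resolution of singularities actually do the work in \cite{HS21}. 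Moreover your proposed induction reduces to ``a DVR situation / a strictly henselian trait'', but tame points are governed by valuations of arbitrary rank, and the passage from rank one to general rank is itself a nontrivial step there. So the proposal correctly locates the hard kernel of the theorem but does not supply the argument for it; as written it does not constitute a proof.
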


Together with our results on the fundamental group, this implies

 \begin{theorem}[Homotopy invariance of tame homotopy groups for regular schemes] \label{homotopyinvariance}
 Let~$S$ be an affine noetherian scheme of characteristic $p > 0$ and $X$ a regular scheme which is essentially of finite type over~$S$.
 Assume that resolution of singularities holds over~$S$.
 Then for every tame point $(\bar{y},\bar{w})$ of\/ $\A^1_X/S$ with image $(\bar{x},\bar{v})$ in $X/S$, the map
  \[
\pi^q_t(\A^1_X/S,(\bar{y},\bar{w})) \lang  \pi^q_t(X/S,(\bar{x},\bar{v})),
 \]
induced by the projection $\pr: \A^1_X \to X$, is an isomorphism for all $q\ge 0$.
\end{theorem}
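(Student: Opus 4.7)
My plan is to verify the isomorphism in degrees $0$ and $1$ using the earlier results on $\hat\pi_1^t$ directly, and then deduce all higher degrees via an Artin--Mazur style comparison theorem for pro-finite pro-spaces.

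First I would reduce to the case that $X$ (hence $\A^1_X$) is connected using \cref{tame-homotop}(ii). The case $q = 0$ is then immediate from \cref{tame-homotop}(i), since the map $\pi_0(\A^1_X) \to \pi_0(X)$ is a bijection. For $q = 1$, both $X$ and $\A^1_X$ are regular, hence normal, hence geometrically unibranch, and $S$ is affine noetherian; so by \cref{tame-homotop}(iv) the pro-groups $\pi_1^t(\A^1_X/S,(\bar y,\bar w))$ and $\pi_1^t(X/S,(\bar x,\bar v))$ are pro-finite, and by \cref{tame-homotop}(iii) they are canonically identified with their profinite completions $\hat\pi_1^t$. \cref{pi1hi}, applied to the normal connected scheme $X$, then provides the isomorphism of these profinite completions, and hence of the pro-groups $\pi_1^t$.

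For $q \geq 2$, the fundamental-group equivalence at the end of section 4 identifies locally constant tame sheaves with finite stalks on $X/S$ (resp.\ on $\A^1_X/S$) with finite discrete $\hat\pi_1^t$-modules on the corresponding side. The isomorphism of profinite tame fundamental groups established in the previous paragraph therefore upgrades to an equivalence of these two sheaf categories induced by $\pr^*$; in particular every locally constant finite tame sheaf on $\A^1_X/S$ is of the form $\pr^* F$ for some such $F$ on $X/S$. \cref{homotopyinvariancecoh} then gives $H^q_t(X/S,F) \cong H^q_t(\A^1_X/S,\pr^*F)$ for all $q$. I would then invoke the Artin--Mazur Whitehead-type comparison theorem (\cite[Cor.\ 4.4]{AM}) for pointed connected pro-spaces with pro-finite homotopy pro-groups: a map inducing an isomorphism on $\pi_1$ and on cohomology with all locally constant finite twisted coefficient systems is a weak equivalence. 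The hypotheses are met in view of \cref{tame-homotop}(iv) and the preceding comparisons, so $\pr$ is a weak equivalence of the associated pro-spaces, giving the claimed isomorphism on $\pi_q^t$ for all $q$.

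The main obstacle is this last invocation: one must match the site-theoretic data (tame hypercoverings, tame cohomology, the pro-simplicial set $\Pi(X/S)_t$) with the setup of Artin--Mazur, and verify that the cohomology used in their Whitehead theorem is precisely the cohomology of the pro-space $\Pi(X/S)_t$ with coefficients in the local systems corresponding to finite $\hat\pi_1^t$-modules -- equivalently, to locally constant finite tame sheaves via the equivalence of section~\ref{hgsect}. Once this bookkeeping is in place, the remainder is a formal consequence of \cref{pi1hi}, \cref{homotopyinvariancecoh}, and the structural properties gathered in \cref{tame-homotop}.
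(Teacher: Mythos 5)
Your proposal is correct and follows essentially the same route as the paper: $q=0$ via \cref{tame-homotop}(i), $q=1$ via \cref{pi1hi} together with \cref{tame-homotop}(iii) and (iv), and $q\ge 2$ by combining \cref{homotopyinvariancecoh} with the Artin--Mazur comparison theorem, using profiniteness of the homotopy pro-groups to pass from the (completed) weak equivalence to isomorphisms on all $\pi_q^t$. The only cosmetic difference is that you spell out the identification of locally constant finite coefficients with $\hat\pi_1^t$-modules and cite \cite[Cor.~4.4]{AM} where the paper cites \cite[Theorem 4.3]{AM}; the substance is the same.
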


\begin{proof}
The statement for $q=0$ follows from \cref{tame-homotop}(i) together with the fact that $\A^1_X$ is connected if $X$ is. By assumption, $X$ is normal, hence the same is true for $\A^1_X$. By \cref{tame-homotop}(iv), $\pi^q_t(\A^1_X/S,(\bar{y},\bar{w}))$ and $ \pi^q_t(X/S,(\bar{x},\bar{v}))$ are profinite for all $q\ge 1$. In particular, \cref{pi1hi} and \cref{tame-homotop}(iii) show the statement for $q=1$.

By \cite[Theorem 4.3]{AM} and \cref{homotopyinvariancecoh}, we conclude that the natural map from the profinite completion of the homotopy type of $(\A^1_X/S)_t$ to the profinite completion of the homotopy type of $(X/S)_t$ is a weak equivalence. Finally, since the tame homotopy groups of $X/S$ and $\A^1_X/S$ are profinite, the  homotopy types of $(\A^1_X/S)_t$ and $(X/S)_t$ are already profinite.
We conclude $\pi^q_t(\A^1_X/S,(\bar{y},\bar{w})) \cong \pi^q_t(X/S,(\bar{x},\bar{v}))$ for all $q$.
\end{proof}

\begin{remark}
If $S$ is pure of characteristic zero and $X$ is an $S$-scheme, then the sites $X_\et$ and $(X/S)_t$ coincide and the statements of \cref{homotopyinvariance,homotopyinvariancecoh} are well known.
\end{remark}

\section{\texorpdfstring{Tame homotopy groups as a functor on $\cH_{\A^1}(\mathrm{Sm}/k)$}{Tame homotopy groups as a functor on HA¹(Sm/k)}}

In this section we will prove that (under the assumption of resolution of singularities) the notion of tame homotopy groups extends  to the $\A^1$-homotopy category $\cH_{\A^1}(\mathrm{Sm}/k)$ of smooth schemes over a field $k$ (as defined by Morel and Voevodsky \cite{MoVo}). This is not the case for the \'{e}tale homotopy groups if $\ch k>0$ (the affine line has a huge fundamental group then).

\bigskip
In fact, we will show a little more. Let~$S$ be a connected affine noetherian regular scheme of characteristic $p > 0$ and assume that resolution of singularities holds over~$S$. Recall that the definition of the  $\A^1$-homotopy category $\cH_{\A^1}(\mathrm{Sm}/S)$ of smooth schemes over $S$ is based on the Nisnevich topology and we will use the more precise notation
$\cH_{\A^1,\Nis}(\mathrm{Sm}/S)$ from now on. The same construction works with the tame and the \'{e}tale topology yielding the categories $\cH_{\A^1,t}(\mathrm{Sm}/S)$ and $\cH_{\A^1, \et}(\mathrm{Sm}/S)$. There are natural functors
\[
\mathrm{Sm}/S \to \cH_{\A^1, \Nis}(\mathrm{Sm}/S)\to  \cH_{\A^1,t}(\mathrm{Sm}/S)\to \cH_{\A^1, \et}(\mathrm{Sm}/S),
\]
where the first functor is induced by the Yoneda embedding and the functors between the various  $\A^1$-homotopy categories are induced by sheafification.

\medskip
Recall from \cref{hgsect} that the tame homotopy groups of geometrically pointed smooth schemes over $S$ are defined as the composition of the functor `pointed tame homotopy type'
\[
\Pi_\bullet^t: (\mathrm{Sm}/S)_\bullet \to \pro\Ha_\bullet
\]
where $\pro \Ha_\bullet$ is the pro-category of the homotopy category of pointed simplicial sets with the usual  homotopy group functor
\[
\pro\Ha_\bullet \stackrel{\pi_*}{\lang} \pro \mathrm{groups}
\]
(for $*=0$ replace $\pro \mathrm{groups}$ by pointed $\pro\sets$).

\medskip
Let $X=\{X_i\}$ be in $\pro\Ha$.  The various coskeletons $\mathrm{cosk}_nX_i$ form a pro-object $X^\natural$ indexed by pairs $(i,n)$.  We have a natural map $X\to X^\natural$ and $X^\natural \to X^{\natural\natural}$ is an isomorphism. We call a map $f: X \to Y$ in $\pro\Ha$ a weak equivalence ($\natural$-isomorphism in \cite{AM}) if $f^\natural: X^\natural \to Y^\natural$
is an isomorphism.
Let
\[
(\pro\Ha)_w
\]
be the full subcategory in $\pro\Ha$ consisting of objects isomorphic to $X^\natural$ for some~$X$. Then $(\pro\Ha)_w$ is the localization of $\pro\Ha$ with respect to the class of weak equivalences and $\natural: \pro\Ha \to (\pro\Ha)_w$ is the localization functor. The  homotopy pro-groups functors $\pi_*$ factor as
\[
\pi_*:  \pro\Ha_\bullet \to (\pro\Ha_\bullet)_w \to \pro\mathrm{groups}.
\]
Given all this, the goal of this section is to prove the following

\begin{theorem} \label{a1factor} Let~$S$ be a connected affine noetherian regular scheme of characteristic $p > 0$ and assume that resolution of singularities holds over~$S$. Then there exists a dashed vertical arrow making the diagram
\[
\begin{tikzcd}
\mathrm{Sm}/S \rar\dar{\Pi^t}& \cH_{\A^1, \Nis}(\mathrm{Sm}/S) \rar&  \cH_{\A^1,t}(\mathrm{Sm}/S)\arrow[d,dashed]\\
 \pro\Ha\arrow[rr, "\natural"]&& (\pro\Ha)_w
\end{tikzcd}
\]
commutative.
\end{theorem}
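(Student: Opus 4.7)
The plan is to extend $\Pi^t$ from $\mathrm{Sm}/S$ to a functor $\widetilde{\Pi}^t$ on simplicial tame sheaves on $\mathrm{Sm}/S$, verify that $\natural \circ \widetilde{\Pi}^t$ inverts both tame-local weak equivalences and $\A^1$-projections, and conclude by the universal property of the iterated Bousfield localization that defines $\cH_{\A^1,t}(\mathrm{Sm}/S)$. The geometric heart of the argument is Theorem~\ref{homotopyinvariance}; everything else is a compatibility check.

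First I would extend $\Pi^t$ using the Artin--Mazur hypercovering formalism. For a simplicial tame sheaf $\cF_\bullet$ on $\mathrm{Sm}/S$, define $\widetilde{\Pi}^t(\cF_\bullet)$ as the pro-object in $\Ha$ obtained by applying $\Pi$ to the filtered category of pointed bisimplicial hypercoverings of $\cF_\bullet$ by representables from $\mathrm{Sm}/S$. For a representable $X$ this reproduces $\Pi^t(X)$, and the construction is manifestly functorial in $\cF_\bullet$. The first descent property to verify is that $\natural \circ \widetilde{\Pi}^t$ sends tame-local weak equivalences of simplicial tame sheaves to isomorphisms in $(\pro\Ha)_w$; this is a formal consequence of the fact that the Artin--Mazur pro-homotopy type of a site is a topos-theoretic invariant and that tame hypercoverings are cofinal among indexing diagrams for $\Pi^t$.

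Next I would verify that $\natural \circ \widetilde{\Pi}^t$ sends $\A^1$-projections $\A^1_X \to X$, for $X\in \mathrm{Sm}/S$, to isomorphisms in $(\pro\Ha)_w$. Since $X$ is regular, it is normal, and so is $\A^1_X$; thus by \cref{tame-homotop}(iv) the pro-groups $\pi_q^t(\A^1_X/S,-)$ and $\pi_q^t(X/S,-)$ are pro-finite for all $q\ge 1$, while for $q=0$ they are constant pro-sets by \cref{tame-homotop}(i). By \cref{homotopyinvariance}, the map $\mathrm{pr}$ induces isomorphisms on all these pro-groups. Under the characterization of $\natural$-isomorphisms as maps inducing isomorphisms on all homotopy pro-groups of the coskeleton tower, this promotes to a $\natural$-isomorphism on $\widetilde{\Pi}^t$, i.e., an isomorphism in $(\pro\Ha)_w$.

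Finally, combining these two facts with the universal property of the iterated Bousfield localization (first at tame hypercoverings to obtain the tame-local simplicial homotopy category, then at $\A^1$-projections to obtain $\cH_{\A^1,t}(\mathrm{Sm}/S)$) yields the desired dashed arrow, and the diagram commutes by construction on the image of $\mathrm{Sm}/S$. The main obstacle will be Step~1: the careful construction of $\widetilde{\Pi}^t$ on simplicial tame sheaves as a genuine functor to $\pro\Ha$, together with the verification that it intertwines tame-local weak equivalences with $\natural$-isomorphisms. The second (and geometrically more substantial) ingredient, namely $\A^1$-invariance, is supplied directly by \cref{homotopyinvariance}, while \cref{tame-homotop}(iv) is essential to translate the isomorphism of pro-groups into the needed weak equivalence in $(\pro\Ha)_w$.
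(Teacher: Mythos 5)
Your proposal follows essentially the same route as the paper: extend the tame homotopy type to simplicial tame sheaves via hypercoverings/trivial local fibrations, check it factors through the simplicial homotopy category, invert the $\A^1$-projections on representables using \cref{homotopyinvariance} together with the pro-Whitehead theorem of Artin--Mazur, and finish by the formal localization argument. The only cosmetic difference is that the paper organizes the extension via the filtering category of trivial local fibrations $\pi\,\mathit{Triv}/\mathcal{X}$ (with hypercoverings cofinal for representables) rather than bisimplicial hypercoverings by representables, and it cites \cite[Corollary 4.2]{AM} directly for the weak-equivalence step rather than routing through profiniteness.
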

Since the structure map of a vector bundle is an isomorphism in $\cH_{\A^1, \Nis}(\mathrm{Sm}/S)$, we particularly obtain
\begin{corollary}
Every vector bundle over a smooth $S$-scheme $X$ has the same tame homotopy groups as $X$.
\end{corollary}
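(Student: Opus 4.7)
The plan is to combine \cref{a1factor} with the standard observation that every vector bundle projection is already an isomorphism in $\cH_{\A^1,\Nis}(\mathrm{Sm}/S)$. Fix a vector bundle $p:V\to X$ with zero section $s:X\to V$ and a tame point $(\bar x,\bar v)$ of $X/S$; let $(\bar y,\bar w):=(s\circ\bar x,\bar v)$, which is a tame point of $V/S$ mapping under $p$ to $(\bar x,\bar v)$ (the field $k(\bar y)$ and the valuation agree with $k(\bar x)$ and $\bar v$, so the tameness condition is inherited). By \cref{tame-homotop}(ii) I may replace $X$ by the connected component containing $\bar x$ and $V$ by its preimage, which is connected since the zero section meets every fiber of $V\to X$.

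The first step is to show that $p$ is an isomorphism in $\cH_{\A^1,\Nis}(\mathrm{Sm}/S)$. I would exhibit the naive $\A^1$-homotopy given by fiberwise scalar multiplication
\[
H:\A^1_S\times_S V \lang V,\quad (t,v)\longmapsto t\cdot v,
\]
which is a well-defined $S$-morphism because $V$ is a vector bundle. One has $H(1,-)=\id_V$ and $H(0,-)=s\circ p$; together with the identity $p\circ s=\id_X$, this shows that $p$ becomes an isomorphism in $\cH_{\A^1,\Nis}(\mathrm{Sm}/S)$ with inverse represented by $s$. Functoriality of the second horizontal arrow in the diagram of \cref{a1factor} then implies that $p$ is also an isomorphism in $\cH_{\A^1,t}(\mathrm{Sm}/S)$.

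Applying the dashed arrow of \cref{a1factor} and using commutativity of the diagram, we conclude that $\natural\,\Pi^t(p)$ is an isomorphism in $(\pro\Ha)_w$; equivalently, $\Pi^t(p):\Pi^t(V)\to\Pi^t(X)$ is a weak equivalence in $\pro\Ha$. Since the homotopy pro-group functor factors as $\pi_q:\pro\Ha_\bullet\to(\pro\Ha_\bullet)_w\to \pro\mathrm{groups}$, this yields the desired isomorphisms
\[
\pi_q^t(V/S,(\bar y,\bar w))\cong \pi_q^t(X/S,(\bar x,\bar v))\quad\text{for all } q\ge 0.
\]
The only bookkeeping obstacle is the passage between the unpointed formulation of \cref{a1factor} and the pointed homotopy groups: since basepoints on $V$ and $X$ correspond under the zero section, both schemes are connected, and the entire construction (sites, $\A^1$-localisation, tame homotopy type, $\natural$-localisation) admits a parallel pointed version with the same proof, the pointed weak equivalence follows formally. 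No new geometric input beyond \cref{a1factor} and the contraction $H$ is required.
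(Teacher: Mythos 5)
Your proposal is correct and takes essentially the same route as the paper, which deduces the corollary in one line from \cref{a1factor} together with the standard fact that a vector bundle projection becomes an isomorphism in $\cH_{\A^1,\Nis}(\mathrm{Sm}/S)$ (via the scaling homotopy you write down). The extra bookkeeping you supply on basepoints and connectedness is sound but not part of the paper's (essentially immediate) argument.
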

\begin{remark}
The profinite completion functor from $ \pro\Ha$ to the homotopy category $\hat\Ha$ of profinite spaces (as defined in \cite{MorelProf}, \cite{QuiProf}) factors through $(\pro\Ha)_w$. Therefore \cref{a1factor} also provides a functor $\cH_{\A^1,t}(\mathrm{Sm}/S) \to \hat\Ha$.
\end{remark}
Recall that the construction of the $\A^1$-homotopy category after Morel/Voevodsky consists of two steps. The first one is to localize the category of simplicial sheaves by simplicial weak equivalences and the second step consists of inverting the class of weak $\A^1$-equivalences.

\medskip
The proof of \cref{a1factor} will occupy the rest of this section. It is essentially equivalent to the proof of \cite[Theorem 8.2]{Sch-Mot}, where the analogous result of \cref{a1factor} was proven for the \'{e}tale topology after completion away from all residue characteristics. We will therefore be sketchy in the sense that we will use the results of \cite{Sch-Mot} as long their proofs in the tame case are identical to those in the \'{e}tale case.

\medskip
For the rest of this section let $S$ be a locally noetherian scheme. We  consider the category $$\D\shvt$$ of simplicial tame sheaves
(of sets) on $\sm$.  By a point we will always mean a tame point. A map of simplicial
sheaves $f: F\to G$ is called a simplicial weak equivalence if for every point $x$ the map
$F_x \to G_x$ is a weak equivalence of simplicial sets. $f$ is called a
cofibration (resp.\ trivial cofibration) if it is injective (resp.\ injective and a weak equivalence). Fibrations are maps
satisfying the right lifting property with respect to trivial cofibrations.  The category of
simplicial sheaves together with these three classes of morphisms is a simplicial closed
model category (\cite[1.1.4]{MoVo}) and we denote the associated homotopy category by $\Ha_{s,t}(\sm)$.

 A map of simplicial sheaves $F\to
G$ is called a  local fibration (resp.\ trivial local fibration) if for every point $x$ the map $F_x \to G_x$ is a
fibration (resp.\ a fibration and a weak equivalence).  A local fibration has the right lifting
property after a tame refinement. Kan-simplicial sets considered as constant simplicial
sheaves are locally fibrant.

The proof of the following proposition is word by word the same as the proof of \cite[Proposition 5.1]{Sch-Mot} (which works for any subcanonical topology).

\begin{proposition} The category $\shvt$ is locally connected. A connected scheme $X\in \sm$ represents a connected sheaf.
\end{proposition}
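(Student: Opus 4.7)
The plan is to deal with the two claims separately, using only that $\shvt$ is a Grothendieck topos, that tame covers are étale (so in particular open), and that the tame topology is subcanonical (the paper recalls that every étale sheaf is a tame sheaf, so representables are tame sheaves). To start, I note that distributivity of the underlying category of $\shvt$ is automatic: in any Grothendieck topos colimits are universal, so $X \times_S \coprod Y_i \cong \coprod X \times_S Y_i$.

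For local connectedness I would carry out the standard topos-theoretic construction: given a sheaf $\cF$, form the equivalence relation on generalized elements generated by $s \sim \varphi^* s$ for morphisms $\varphi : V \to U$ in the site and $s \in \cF(U)$, and sheafify the resulting partition to obtain a decomposition $\cF = \coprod_\alpha \cF_\alpha$ into subsheaves which are connected by construction. This step is essentially formal and I do not expect difficulty here.

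For the second assertion, suppose $X$ is a connected scheme and $h_X \cong \cF \sqcup \cG$ in $\shvt$ with neither factor initial. Evaluating at the identity section $\id_X \in h_X(X)$ and using that the sheaf coproduct is the sheafification of the presheaf coproduct, I obtain a tame covering $(U_i \to X)_{i \in I}$ such that each $U_i \to X$ factors (by Yoneda, applicable since the topology is subcanonical) through exactly one of $\cF$ or $\cG$. This partitions $I = I_\cF \sqcup I_\cG$. Since tame maps are étale and hence open, the unions $V_\cF = \bigcup_{i \in I_\cF} \im(U_i \to X)$ and $V_\cG = \bigcup_{i \in I_\cG} \im(U_i \to X)$ are open subsets of $X$ whose union is all of $X$. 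Connectedness of $X$ will then force one of them to be empty, contradicting non-triviality of the decomposition.

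The hard part is to show that $V_\cF$ and $V_\cG$ are \emph{disjoint}. My plan is to exploit that $\cF \times_{h_X} \cG$ is the initial sheaf: for $i \in I_\cF$ and $j \in I_\cG$ the scheme-theoretic fibre product $U_i \times_X U_j$ represents a subsheaf of $\cF \times_{h_X} \cG$ and so must itself be empty as a scheme. Openness of the étale maps $U_i \to X$ then gives $V_\cF \cap V_\cG = \varnothing$, and connectedness of $X$ concludes the argument.
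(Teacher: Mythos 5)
Your treatment of the \emph{second} assertion is essentially sound and follows the standard route (the paper itself only cites \cite[Proposition 5.1]{Sch-Mot}, whose argument for any subcanonical topology on $\sm$ runs along exactly these lines): extract from $\id_X$ a tame covering $(U_i\to X)$ whose members factor through $\cF$ or $\cG$; observe that for members of different types $h_{U_i\times_X U_j}=h_{U_i}\times_{h_X}h_{U_j}$ maps to $\cF\times_{h_X}\cG=\varnothing$ and is therefore initial, which forces $U_i\times_X U_j=\varnothing$ because tame coverings are surjective families, so the empty family covers no nonempty scheme; the images then form a clopen partition of the connected $X$. Two small elisions: $h_{U_i\times_X U_j}\to h_X$ need not be a monomorphism, so ``represents a subsheaf of $\cF\times_{h_X}\cG$'' should be ``admits a morphism to'' (which suffices, since any object mapping to the initial object of a topos is initial); and from $V_\cG=\varnothing$ you must still conclude that $\cG$ is initial --- do so by noting that then $\id_X$ lies in the subsheaf $\cF\subset h_X$ by the sheaf property, hence $\cF=h_X$ and $\cG=\cF\cap\cG=\varnothing$.

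The genuine gap is in the \emph{first} assertion. Local connectedness is not a formal property of Grothendieck toposes, and the construction you describe does not produce connected pieces in general: for sheaves on the Cantor set (or on $\Q$) every section of the terminal sheaf is equivalent to every other, so your equivalence relation yields a single class whose associated subsheaf is the terminal object --- which is not connected; that topos is not locally connected. Connectedness of the pieces $\cF_\alpha$ is precisely where the geometry enters, and it enters through the second assertion, so the logical order must be reversed. Concretely: since $S$ is locally noetherian, every $U\in\sm$ is a disjoint union of nonempty connected schemes, this Zariski covering is a tame covering, and hence $h_U\cong\coprod_k h_{U^{(k)}}$ with each $h_{U^{(k)}}$ connected by the second assertion. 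Therefore every tame sheaf receives an epimorphism from a coproduct of connected objects, and the decomposition into connected components follows by the usual argument (group the images of the connected pieces under the equivalence relation generated by nonempty intersection). Equivalently, your $\cF_\alpha$ \emph{are} connected, but only because every section over a nonempty connected $U$ is a map from the connected object $h_U$ and so lands in exactly one summand of any coproduct decomposition of $\cF_\alpha$, and because zigzags of restrictions keep it in the same summand. Without these inputs your first step is either false in general or circular.
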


The rule associating to a sheaf its set of connected components defines the connected
component functor
\[
\Pi: \shvt \lang \sets.
\]
This functor naturally extends to simplicial sheaves (taking values in simplicial sets).
Furthermore, simplicial homotopies between maps of simplicial sheaves carry over to homotopies
between maps of  simplicial sets.

\medskip\medskip
For simplicial sheaves $\mathcal{X}$, $\mathcal{Y}$ we denote by $\pi(\mathcal{X},\mathcal{Y})$ the
quotient of $\Hom(\mathcal{X},\mathcal{Y})=S_0(\mathcal{X},\mathcal{Y})$ with respect to the equivalence
relation generated by simplicial homotopies, i.e., the set of connected components of the
simplicial function object $S(\mathcal{X},\mathcal{Y})$, and call it the set of {\em simplicial
homotopy classes of morphisms} from $\mathcal{X}$ to $\mathcal{Y}$. The
simplicial homotopy relation is compatible with composition and thus one gets a category $\pi
\Delta^{op}\shvt$ with objects the simplicial sheaves and morphisms the simplicial homotopy
classes of morphisms.

For a simplicial sheaf $\mathcal{X}$ we denote by $\pi \mathit{Triv}/\mathcal{X}$ the category whose objects are the trivial local fibrations to $\mathcal{X}$
and whose morphisms are  commutative triangles in $\pi \Delta^{op}\shvt$. This
category is filtering and essentially small  by \cite{MoVo}, 2.1.12.

We consider a sheaf $F$ as a simplicial sheaf with $F$ in each degree and all face and degeneracy morphisms  the identity of $F$.  For a scheme $X\in \sm$ we denote by $h_X$ the tame sheaf that it represents. A hypercovering $U_{\cdot}$ of $X$
is a hypercovering in the small tame site  $(X/S)_t$.

\begin{lemma}\label{hypercovpitriv}
Let $X$ be a smooth $S$-scheme and let $U_{\cdot}$ be a  hypercovering of $X$. Then the projection
\[
h_{U_{\cdot}} \lang h_X
\]
is a trivial local fibration in $\D\shvt$.

The trivial local fibrations of the form $h_{U_{\cdot}} \to h_X$ are cofinal in $\pi \mathit{Triv}/h_{X}$.
\end{lemma}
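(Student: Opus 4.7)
The plan is to prove both parts of the lemma by reducing them to classical statements about simplicial sets and hypercoverings, using that the tame site has enough tame points (\cref{mainproperty} and the lemma immediately following it). Throughout, ``point'' means tame point, and both simplicial weak equivalences and local fibrations are detected on stalks.

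First I would identify the stalks of representable simplicial sheaves. For any $Y \in \sm$ and any tame point $\bar x$, the stalk $(h_Y)_{\bar x}$ is a discrete set (the tame-henselian $X$-lifts of $\bar x$ into $Y$), and hence the stalk of $h_{U_\cdot}$ is the simplicial set $U_\cdot^{\bar x}$ obtained by taking stalks in each degree. The hypercovering condition says that each matching map $U_n \to (\mathrm{cosk}_{n-1} U_\cdot)_n$ is a tame covering; since tame coverings are precisely the local epimorphisms of representable sheaves, these matching maps are stalkwise surjective. Combined with the fact that stalks commute with the (finite-limit) coskeleton functor, this is exactly the Kan trivial-fibration condition expressing that $U_\cdot^{\bar x} \to \ast = (h_X)_{\bar x}$ is a trivial Kan fibration of simplicial sets, which establishes the first assertion.

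For the cofinality statement I would follow Verdier's classical inductive construction, as done for the \'etale topology in \cite[\S 8]{AM} and \cite{Sch-Mot}. Given any trivial local fibration $F \to h_X$ in $\D\shvt$, one builds a tame hypercovering $U_\cdot$ of $X$ together with a simplicial map $h_{U_\cdot} \to F$ over $h_X$, defining $U_n$ degree by degree so as to refine the stalkwise surjective matching maps of $F$ by tame coverings of representable objects. At each stage the refinement exists because local epimorphisms in the tame topology can be refined by representable tame coverings, and by \cref{mainproperty}(iv) one can further pass to a finite tame cover, which keeps each $U_n$ representable and in $\sm$.

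The only point that requires care is this last refinement step: one must know that each stalkwise surjective map of sheaves produced in the Verdier construction can actually be refined by a tame covering whose source is representable by a smooth $S$-scheme. This is ensured by the definition of the tame topology (coverings are \'etale, hence land in $\sm$) together with the finite-subcovering property \cref{mainproperty}(iv). Once this is granted, the remainder of the argument is purely formal and proceeds word-for-word as in the \'etale case treated in \cite{Sch-Mot}.
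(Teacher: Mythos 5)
Your proposal is correct and follows essentially the same route as the paper, which simply invokes the \'etale-case proofs of \cite[Lemmas 4.2, 4.3]{Sch-Mot}: the first assertion is the stalkwise identification of the hypercovering condition with the trivial-Kan-fibration condition (using exactness of tame stalk functors and that the site has enough tame points), and the second is Verdier's inductive refinement of a trivial local fibration by representable tame coverings. Your extra care about keeping the $U_n$ representable in $\sm$ via \cref{mainproperty}(iv) is consistent with how the paper handles the analogous point elsewhere (e.g.\ in the proof of \cref{tame-homotop}(iv)).
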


\begin{proof}
The similar assertion for \'{e}tale hypercoverings is \cite[Lemmas 4.2,4.3]{Sch-Mot} and the proof is word-by-word the same in the tame case.
\end{proof}

For a scheme $X$ essentially of finite type over $S$ we denote its tame homotopy type, i.e., the connected component functor from tame hypercoverings of $X$ to $\Ha$ by
\[
X_\tat \in \pro\Ha.
\]
By \cref{hypercovpitriv}, the next definition extends via Yoneda the functor tame homotopy type from $\sm$ to $\D\shvt$:
\begin{definition}
For a simplicial sheaf $\mathcal{X} \in \D\shvt$ the tame homotopy type $\mathcal{X}_\tat\in \pro\Ha$ is defined as the connected component  functor
\[
\Pi: \pi \mathrm{Triv}/\mathcal{X}  \lang \Ha .
\]
\end{definition}

With this definition, the same proof as that of \cite[Theorem 6.4]{Sch-Mot} shows factorization through the simplicial homotopy category.
\begin{proposition}
The functor tame homotopy type
\[
\tat: \D\shvt \lang \pro\Ha
\]
factors through the localization $\D\shvt \to \Ha_{s,t}(\sm)$.
\end{proposition}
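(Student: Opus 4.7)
The plan is to apply the universal property of the localization: a functor out of $\D\shvt$ factors through $\Ha_{s,t}(\sm)$ if and only if it sends every simplicial weak equivalence to an isomorphism. I would verify this criterion for $\tat$ in three steps, following the structure of \cite[Theorem 6.4]{Sch-Mot}.

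First, simplicial homotopy invariance. Because the connected component functor $\Pi:\shvt\to\sets$ commutes with products by constant simplicial objects such as $\Delta[1]$, a simplicial homotopy $H:\mathcal{X}\times\Delta[1]\to\mathcal{Y}$ between morphisms $f_{0},f_{1}$ pulls each trivial local fibration $V\to\mathcal{Y}$ back to a trivial local fibration over $\mathcal{X}\times\Delta[1]$; applying $\Pi$ yields a simplicial homotopy between the induced pro-morphisms. Hence $(f_{0})_{\tat}=(f_{1})_{\tat}$ in $\pro\Ha$, and $\tat$ descends to the simplicial homotopy quotient $\pi\D\shvt$.

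Second, trivial local fibrations become isomorphisms. For $p:\mathcal{X}\to\mathcal{Y}$ a trivial local fibration, composition with $p$ defines a functor $c_{p}:\pi\mathrm{Triv}/\mathcal{X}\to\pi\mathrm{Triv}/\mathcal{Y}$, since trivial local fibrations are closed under composition. A cofinality argument---refining any $V\to\mathcal{Y}$ by a cofibrant approximation $V'\to V$ and lifting $V'\to\mathcal{Y}$ along $p$ to $\mathcal{X}$ by the lifting property of the model structure---shows that $c_{p}$ is cofinal. Hence $p_{\tat}:\mathcal{X}_{\tat}\to\mathcal{Y}_{\tat}$ is an isomorphism in $\pro\Ha$.

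Finally, for a general simplicial weak equivalence $f:\mathcal{X}\to\mathcal{Y}$, the factorization axioms of the simplicial model structure on $\D\shvt$ permit writing $f=p\circ i$ with $p$ a trivial local fibration and $i$ a trivial cofibration. The main obstacle is inverting $i_{\tat}$, since trivial cofibrations do not directly supply the cofinality used above. I would resolve this by passing to functorial fibrant replacements $\mathcal{X}\to R\mathcal{X}$ of \cite{MoVo}: between cofibrant-fibrant simplicial sheaves, Whitehead's theorem in the simplicial model category identifies simplicial weak equivalences with simplicial homotopy equivalences, which are inverted by the first step. Bookkeeping the comparison diagrams relating $f$ to its fibrant replacement (zigzags of trivial local fibrations and simplicial homotopy equivalences, exactly as in \cite[Theorem 6.4]{Sch-Mot}) delivers the desired isomorphism $f_{\tat}$ in $\pro\Ha$.
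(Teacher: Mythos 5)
Your first two steps are precisely the substance of the argument the paper delegates to \cite[Theorem 6.4]{Sch-Mot}: $\tat$ is defined on simplicial homotopy classes and hence descends to $\pi\D\shvt$, and it inverts trivial local fibrations because composition induces a cofinal functor $\pi\mathit{Triv}/\mathcal{X}\to\pi\mathit{Triv}/\mathcal{Y}$. (One small imprecision in step 2: every object is already cofibrant here, and a trivial \emph{local} fibration has the right lifting property only after a further refinement by a trivial local fibration, so the cofinality argument proceeds by refinement inside $\pi\mathit{Triv}$ rather than by the global lifting axiom of the model structure.)

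The genuine gap is in your third step. After factoring $f=p\circ i$ you must invert $\tat(i)$ for a trivial cofibration $i$, and your proposed remedy --- compare with fibrant replacements and apply Whitehead's theorem between (cofibrant-)fibrant objects --- is circular: the comparison map $\mathcal{X}\to R\mathcal{X}$ is itself a trivial cofibration, it is not a trivial local fibration, and it is not a simplicial homotopy equivalence unless $\mathcal{X}$ is already fibrant, so none of the classes of maps you have shown to be inverted applies to it; the ``zigzag of trivial local fibrations and simplicial homotopy equivalences'' you appeal to is exactly what is missing. The way the argument is actually closed in \cite[Theorem 6.4]{Sch-Mot} is via the generalized Verdier hypercovering theorem \cite[2.1.13]{MoVo}: the trivial local fibrations admit a calculus of right fractions in $\pi\D\shvt$, and $\Ha_{s,t}(\sm)$ is the resulting localization, with $\Hom_{\Ha_{s,t}}(\mathcal{X},\mathcal{Y})=\colim_{(\mathcal{X}'\to\mathcal{X})\in\pi\mathit{Triv}/\mathcal{X}}\pi(\mathcal{X}',\mathcal{Y})$ for locally fibrant $\mathcal{Y}$. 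Granting that, your steps 1 and 2 already force the factorization through $\Ha_{s,t}(\sm)$ by the universal property of a localization, and your step 3 becomes unnecessary; without it, step 3 does not go through as written.
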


\cref{factor} completes the proof of \cref{a1factor}.

\begin{theorem}\label{factor}  Let~$S$ be a connected affine noetherian regular scheme of characteristic $p > 0$ and assume that resolution of singularities holds over~$S$. Then the composite
\[
\natural \circ  \tat: \Ha_{s,t}(\sm) \lang (\pro\Ha)_w
\]
factors through the tame $\A^1$-homotopy category $\Ha_{\A^1,t}(\sm)$.
\end{theorem}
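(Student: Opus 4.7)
The plan is to apply the universal property of $\A^1$-localization, following the strategy of \cite[Theorem 8.2]{Sch-Mot} with the tame topology in place of the étale one. The $\A^1$-homotopy category $\Ha_{\A^1, t}(\sm)$ is the localization of $\Ha_{s, t}(\sm)$ at the class of $\A^1$-weak equivalences, which is the saturated class generated by the projections $p_X\colon X \times \A^1 \to X$ for $X \in \sm$. Hence the composite $\natural \circ \tat$ factors through $\Ha_{\A^1, t}(\sm)$ provided it sends every $p_X$ to an isomorphism, together with the formal compatibilities with simplicial homotopy colimits that were established in the étale setting of \cite{Sch-Mot} and carry over since $\tat$ is defined by exactly the same construction on both sides.

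To verify the condition on each $p_X$, I would translate it into a statement about tame homotopy pro-groups. By definition of $(\pro\Ha)_w$ as the localization of $\pro\Ha$ at $\natural$-equivalences, a morphism in $\pro\Ha$ descends to an isomorphism in $(\pro\Ha)_w$ if and only if it induces isomorphisms on all homotopy pro-groups $\pi_q$, $q \ge 0$, at every basepoint. Applied to $\tat(p_X)\colon (\A^1_X)_\tat \to X_\tat$, this requires
\[
 \pi_q^t(\A^1_X/S,(\bar y,\bar w)) \lang \pi_q^t(X/S,(\bar x,\bar v))
\]
to be an isomorphism for every tame point $(\bar y,\bar w)$ of $\A^1_X/S$ over $(\bar x,\bar v)$ in $X$ and every $q \ge 0$.

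This is precisely the content of \cref{homotopyinvariance}, whose hypotheses are satisfied here: since $S$ is regular and $X \to S$ is smooth, both $X$ and $\A^1_X$ are regular schemes essentially of finite type over $S$. Combined with the profiniteness statement in \cref{tame-homotop}(iv), the required isomorphism holds in all degrees $q \ge 0$, so $\natural \circ \tat$ inverts each $p_X$ and the desired factorization exists.

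The main obstacle I expect is not the homotopy-invariance input—this is already available through \cref{homotopyinvariance}—but rather the bookkeeping needed to show that inverting the generating projections $p_X$ really does suffice to invert every $\A^1$-weak equivalence under $\natural \circ \tat$. This is the formal work deferred to \cite{Sch-Mot}: one must check that $\natural \circ \tat$ behaves appropriately with respect to the simplicial homotopy colimits that enter in the Bousfield localization producing $\Ha_{\A^1, t}(\sm)$. Given that the definition of $\tat$ in terms of trivial local fibrations and the connected-components functor is identical to the étale version, this verification transfers from the étale case to the tame case essentially verbatim, the only new input being \cref{homotopyinvariance} itself.
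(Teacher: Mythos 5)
Your proposal is correct and follows essentially the same route as the paper: reduce to showing that $\natural\circ\tat$ inverts the projections $\A^1_U\to U$, deduce this from \cref{homotopyinvariance} via the Artin--Mazur criterion that a map in $\pro\Ha$ becomes an isomorphism after $\natural$ once it induces isomorphisms on all homotopy pro-groups, and defer the remaining formal localization bookkeeping to \cite[Theorem 8.2]{Sch-Mot}. The only cosmetic difference is that the paper phrases the reduction as ``we may assume $U$ connected (after choosing suitable base points)'' rather than quantifying over all basepoints.
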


\begin{proof}
For any smooth scheme $U$ over $S$ the
projection $\A^1_U \to U$ induces a weak equivalence $${(\A^1_U)}_\tat \to
U_\tat$$ in $\pro\Ha$. Indeed, in order to show this, we may assume that $U$ is connected. Then (after choosing suitable base points), the projection induces isomorphisms on all tame homotopy groups by \cref{homotopyinvariance} and is hence a weak equivalence by \cite[Corollary 4.2]{AM}.

From this, the assertion of the theorem follows via a formal process which is described in detail in the proof of \cite[Theorem 8.2]{Sch-Mot}.
\end{proof}

\vskip1cm\noindent
\textbf{Acknowledgments.}
The author acknowledges support by Deutsche Forschungsgemeinschaft  (DFG) through the Collaborative Research Centre TRR 326 ``Geometry and Arithmetic of Uniformized Structures'', project number 444845124.

\vskip1cm\noindent
\textbf{Declarations.}
The author declares that there is no conflict of interest. Data sharing is not applicable to this article as no datasets were generated
or analysed during the current study.

\bibliographystyle{./meinStil}
\bibliography{./citations}

\small{
{\sc  Universit\"{a}t Heidelberg, Institut f\"{u}r Mathematik, Im Neuenheimer Feld 205, D-69120 Heidelberg, Deutschland}

\textit{E-mail address:} {\tt schmidt@mathi.uni-heidelberg.de}}

\end{document}